\documentclass{amsart}

\usepackage{vmargin,amsmath,amsfonts,amssymb,amsthm,xcolor,amscd,latexsym,xspace,enumerate}
\usepackage[pagebackref, colorlinks=true,linkcolor=blue,citecolor=red]{hyperref}
\usepackage{hyperref}
\usepackage{pdfpages}
\usepackage{tikz-cd}
\usetikzlibrary{arrows}
\usetikzlibrary{intersections}
\usepackage{pgfplots}
\usetikzlibrary{calc,3d,shapes, pgfplots.external, intersections}
\usepackage{pgfplots}
\pgfplotsset{compat=1.18}
\usepackage{multicol}

\newtheorem{theorem}{Theorem}[section]
\newtheorem{corollary}[theorem]{Corollary}
\newtheorem{lemma}[theorem]{Lemma}
\newtheorem{proposition}[theorem]{Proposition}

\theoremstyle{definition}
\newtheorem{definition}[theorem]{Definition}
\newtheorem{remark}[theorem]{Remark}

\newtheorem{example}[theorem]{Example}
\numberwithin{equation}{section}

\begin{document}

\title[On the number of modular pairs...]{On the number of modular pairs\\ 
in finite dimensional Lie algebras on finite fields}

\author[S.K. Muhie]{Seid Kassaw Muhie}
\address{Seid Kassaw Muhie \endgraf  
Institute of Data Science and Digital Technologies\endgraf 
Vilnius University -- Lithuania\endgraf
}

\author[D.E. Otera]{Daniele Ettore Otera}
\address{Daniele Ettore Otera \endgraf  
Institute of Data Science and Digital Technologies \endgraf  Vilnius University -- Lithuania \endgraf
}

\author[F.G. Russo]{Francesco G. Russo}
\address{Francesco G. Russo \endgraf
School of Science and Technology \endgraf 
University of Camerino -- Italy \endgraf
and \endgraf
Department of Mathematics and Applied Mathematics \endgraf University of the Western Cape -- South Africa\endgraf
and
\endgraf
Department of Mathematics and Applied Mathematics \endgraf University of Cape Town -- South Africa
}

\keywords{Subalgebra commutativity degree;  Semi-modular Lie algebra; Heisenberg algebra. 
\endgraf
\textit{Mathematics Subject Classification (2020):} Primary: 17B05, 17B50, 17B30; Secondary: 20D60, 20P05.
}


\begin{abstract} 
Given a finite dimensional Lie algebra $L$ on a finite field $\mathbb{F}_{p^n}$ of prime power order $p^n$ (with $n$ positive integer and $p$ prime), we consider the number of modular pairs $(A,B)$ in the lattice of all subalgebras $\mathcal{L}(L)$ and introduce the notion of ``subalgebra  commutativity degree'' of $L$. This represents the probability to find that two randomly chosen  subalgebras $A$ and $B$ of $L$ are permutable. We investigate the subalgebra commutativity degree of $L$ in connection with recent techniques of algebraic combinatorics and number theory, providing upper and lower bounds which may influence the structure of $L$.  A specific study for the subalgebra commutativity degree of Heisenberg  algebras  is executed.
\end{abstract}

\maketitle

\section{Introduction and statement of the main results}
In the present paper we denote a  field with the symbol $\mathbb{F}$ and with $\mathrm{char}(\mathbb{F})$ its characteristic. Of course, either    $\mathrm{char}(\mathbb{F})=0$, or $\mathrm{char}(\mathbb{F})=p$ is a prime, and  for finite fields of prime power order $p^n$ we write  $\mathbb{F}_{p^n}$, where $n$ is a positive integer.  A significant number of works, which have been published between the 60s and the 80s, investigated   the structure of a Lie algebra $L$ from restrictions on the lattice $\mathcal{L}(L)$ of its subalgebras (see \cite{amayo1976, amsc,  elduque1, gein, kolman1965, lashi, david1981}). Given  $A, B$  subalgebras of a Lie algebra $L$ on $\mathbb{F}$, Towers and others \cite{elduque1, elduque2, david1981, david1987} called  $(A, B)$ a \textit{modular pair}  in  $\mathcal{L}(L)$ of $L$,  if
\[ \begin{cases}
(A\cup B)\cap C =A\cup (B\cap C) \ \ \mbox{for all subalgebras} \ \ A\subseteq C\\
\\
(A\cup B)\cap C =B\cup (A\cap C) \ \ \mbox{for all subalgebras} \ \ B\subseteq C,\\
\end{cases} \] 
 where  $A\cup B=\langle A,B \rangle $ denotes the subalgebra of $L$ generated by $A$ and $B$, that is, the smallest subalgebra of $L$ containing both $A$ and $B$. Again Towers \cite{david1981} defines  $A$ and $B$ \textit{permutable}, if $$A\cup B= A+B,$$ that is, if $A\cup B$ agrees with the vector space sum of $A$ and $B$, namely $$A+B=\{f_1a+f_2b \mid f_1, f_2 \in \mathbb{F}, a \in A, b\in B\}.$$  From \cite[Lemma 1.1]{david1981},  if  $A$ permutes with $B$, then $(A, B)$ is a modular pair, but  the converse is false. This means that the following implications hold:
   \[A \ \mbox{and} \ B \  \mbox{are ideals in } \ L  \ \Longrightarrow \ A \ 
 \mbox{and} \ B \ \mbox{are permutable in} \ L \ \Longrightarrow \ (A,B) \ \mbox{is a modular pair}.\]
It is possible to produce counterexamples, in order to check that the converse of the above  implications are false, see \cite{elduque1, lashi, david1981}. Lashi and Towers \cite{lashi, david1981} classified \textit{modular Lie algebras}, that is, Lie algebras in which each pair of subalgebras is permutable, that is, Lie algebras $L$ such that  $\mathcal{L}(L)$ satisfies the modular law. These are structurally close to abelian Lie algebras.
Note also that Amayo \cite{amayo1976} shows that for  the subspaces $A$ and $B$  of a Lie algebra $L$ on a field $\mathbb{F}$,   
\[A \ 
 \mbox{and} \ B \ \mbox{are permutable in} \ L \Longleftrightarrow \  [A,B]\subseteq A+B, \ \ \mbox{where}\ \ [A,B]=\langle [a,b] \mid a\in A, b\in B\rangle.\]
 Here $[a,b]=ab-ba$ denotes the Lie bracket on $a \in A$ and $b\in B$ and $[A,B]$  the smallest Lie algebra in $L$ containing the elements of the form $ab-ba$.
Note that a \textit{quasi-ideal} $Q$ of $L$ is a modular subalgebra of  $L$. In particular,  the subalgebra $Q$ of $L$ is a quasi-ideal if it is permutable with every subspace $R$ of $L$, i.e., if $[Q, R]\subseteq Q+R$ for every subspace $R$ of $ L$. This follows the classical terminology of Amayo \cite{amayo1976,amayo1976II}.

On the other hand,  the number of modular pairs in finite dimensional Lie algebras was studied  by Towers \cite{david1981} with different methods, since he found analogies with the ``modularity conditions'' of finite groups, see \cite{rs}. Just to give an idea: generalizing a classical result of Ito \cite{rs} for which a finite group $G=HK$ with $H$ and $K$ abelian subgroups of $G$ must be metabelian,  Kegel \cite{kegel1} proved that  $G=HK$ with $H$ and $K$ nilpotent subgroup of $G$ must be solvable. This opened a long line of research on the ``permutability conditions'' and ``modularity conditions'' for prescribed families of subgroups, see \cite{hhr, lenston, rs}. In particular, Kegel \cite{kegel2} explored the first connections  between subnormal subgroups and permutable subgroups of finite groups. In case of finite dimensional Lie algebras $L$ on  $\mathbb{F}$, Kolman \cite{kolman1965} noted that a corresponding formulation of the result of Kegel was possible, namely if
 $L$ is a finite dimensional Lie algebra such that $L = N_1+N_2$, and $N_1$ and $N_2$ are abelian subalgebras, then   $L$ is metabelian.

It should be said that some difficulties appear when we leave the context of finite groups. For instance, the sum of two randomly chosen subspaces of a vector space $V$ (which has trivially the structure of Lie algebra) is  a vector subspace, but the union of two vector subspaces is not necessarily  a subspace of $V$. On the other hand,  if every element of a finite dimensional Lie algebra $L$ generates an ideal, then $L$ is abelian.  Note also that  one dimensional Lie algebras are abelian and modular.  In fact,  one  and  two dimensional Lie algebras are  solvable and modular, since they haven't  sublattices isomorphic to the pentagonal sublattice $N_5$, see \cite{lenston, rs}. However there are two dimensional Lie algebras which are non-nilpotent.  In analogy with the \textit{subgroup commutativity degree} of finite groups in  \cite{farrokhi2, sr1, sr2, or,  mt}, we may introduce the following notion:

\begin{definition}[Subalgebra commutativity degree]\label{sdl}Given a finite dimensional Lie algebra $L$ on the finite field $\mathbb{F}_{p^n}$ we call  \textit{subalgebra  commutativity degree} of $L$  the quantity 
\[
\mathrm{sd}(L)  =\frac{1}{|\mathcal{L}(L)|^2}\left|\left\{(A, B) \in \mathcal{L}(L) \times  \mathcal{L}(L)\  \mid \ \ [A, \  B] \subseteq A+B   \right\}\right|. 
\]
\end{definition} 

It is useful to note that  Definition \ref{sdl} measures the  probability that two randomly chosen subalgebras $A$ and $B$ of $L$ on a finite field permute, where $|\mathcal{L}(L)|$  turns out to be finite.   If  $L= \mathfrak{sl}_2(\mathbb{C})$ is the special linear Lie algebra on the complex field $\mathbb{C}$,  or  if $L$ is an infinite dimensional Lie algebra of smooth vector fields on a manifold, then $|\mathcal{L}(L)|$ would be infinite. Of course, according to Definition \ref{sdl}, $L$ should be finite dimensional on a finite field, so these two cases are automatically excluded. On the other hand, if  $m$ is a positive integer and consider the \textit{Heisenberg algebra}  $\mathfrak{h}(m)$ on $\mathbb{F}_p$ of dimension $\mathrm{dim} \ \mathfrak{h}(m)=2m+1$, namely the Lie algebra
\[\mathfrak{h}(m) = \langle x_i, y_i, z \mid [x_i, y_i] = z,  \ 1 \leq i \leq m\rangle,\]
then it is reasonable to ask the values of $\mathrm{sd}(\mathfrak{h}(m))$. In fact $\mathfrak{h}(m)$  is a nilpotent finite dimensional Lie algebra of nilpotency class 2 such that \[|\mathfrak{h}(m)|=p^{\mathrm{dim} \ \mathfrak{h}(m)}, \ \ [\mathfrak{h}(m), \mathfrak{h}(m)]=\langle z\rangle  \ \ \mbox{and} \ \ [[\mathfrak{h}(m), \mathfrak{h}(m)], \mathfrak{h}(m)] \  \mbox{is trivial}.\] The Heisenberg algebra, $\mathfrak{h}(m)$ on  $\mathbb{F}_p$ plays a critical role in various fields, including quantum mechanics, cryptography, and digital signal processing.  
On the basis of the available theory of the subgroup commutativity degree in \cite{farrokhi1, farrokhi2,   sr1, sr2, or, mt} for finite groups, we are able to develop a body of results (as per Definition \ref{sdl}) which are not always similar to the corresponding case of finite groups.  
Our first main result deals with a computation of the subalgebra commutativity degree for certain families of Heisenberg algebras.
\begin{theorem} \label{heisenberg}
   For  \(\mathfrak{h}(1)=\langle x,y,z \mid [x,y]=z\rangle \) on \(\mathbb{F}_{p}\) we have 
   $$\mathrm{sd}(\mathfrak{h}(1))=\frac{3p^3+12p^2+16p+16}{{(p^2+2p+4)^2}}.$$
\end{theorem}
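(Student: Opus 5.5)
The plan is a direct count. The idea is to list all subalgebras of $\mathfrak{h}(1)$ over $\mathbb{F}_p$, then count the ordered pairs $(A,B)$ that are \emph{not} permutable, i.e.\ those with $[A,B]\not\subseteq A+B$. Throughout I use that $[\mathfrak{h}(1),\mathfrak{h}(1)]=\langle z\rangle$ is central and one-dimensional. Hence for any subalgebras $A,B$, either $[A,B]=0$ or $[A,B]=\langle z\rangle$. So $(A,B)$ fails to permute exactly when $[A,B]\neq 0$ and $z\notin A+B$.

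\emph{Step 1: the lattice $\mathcal{L}(\mathfrak{h}(1))$.}
\begin{itemize}
\item Dimensions $0$ and $3$ give one subalgebra each.
\item Every one-dimensional subspace is an abelian subalgebra, which gives $p^2+p+1$ subalgebras.
\item A two-dimensional subspace $P=\langle u,v\rangle$ is a subalgebra iff $z\in P$. If $z\notin P$, then $P$ maps isomorphically onto $\mathfrak{h}(1)/\langle z\rangle$. So $u,v$ are independent modulo $z$, hence $[u,v]$ is a nonzero multiple of $z$ not lying in $P$. Conversely, if $z\in P$ then $[P,P]\subseteq\langle z\rangle\subseteq P$. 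The planes through $z$ correspond to lines of the $2$-dimensional quotient, so there are $p+1$ of them.
\end{itemize}
Hence $|\mathcal{L}(\mathfrak{h}(1))|=1+(p^2+p+1)+(p+1)+1=p^2+2p+4$, which is the base of the denominator.

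\emph{Step 2: which pairs can fail.} If $A$ or $B$ is $0$ or $\mathfrak{h}(1)$, the pair trivially permutes. If $A$ or $B$ is a plane, it contains $z$, so $z\in A+B$ and the pair permutes. The same holds if either one is the line $\langle z\rangle$. So only pairs of lines $A=\langle a\rangle$, $B=\langle b\rangle$ with $a,b\notin\langle z\rangle$ can fail.

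For such lines, $[a,b]\neq 0$ iff $a,b$ are linearly independent modulo $z$. In that case $\langle a,b,z\rangle=\mathfrak{h}(1)$ has dimension $3$, so automatically $z\notin\langle a,b\rangle=A+B$. Therefore the non-permuting ordered pairs are exactly the ordered pairs of lines not through $z$ whose images in $\mathfrak{h}(1)/\langle z\rangle$ are distinct lines.

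\emph{Step 3: counting.} There are $p^2+p$ lines not through $z$. They lie over the $p+1$ lines of the quotient, $p$ in each fibre. The number of ordered pairs with distinct images is
\[
(p^2+p)^2-(p+1)p^2=p^3(p+1).
\]
Subtracting from the total number of ordered pairs gives
\[
(p^2+2p+4)^2-p^4-p^3=3p^3+12p^2+16p+16,
\]
since $(p^2+2p+4)^2=p^4+4p^3+12p^2+16p+16$. Dividing by $|\mathcal{L}(\mathfrak{h}(1))|^2$ yields the claimed value of $\mathrm{sd}(\mathfrak{h}(1))$.

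The only genuinely delicate point is Step 1: determining exactly which $2$-dimensional subspaces are subalgebras, namely only those containing $z$. Once this is settled, everything else is elementary linear algebra over $\mathbb{F}_p$.
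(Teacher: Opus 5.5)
Your proof is correct and follows essentially the same route as the paper: both enumerate $\mathcal{L}(\mathfrak{h}(1))$ as $1+(p^2+p+1)+(p+1)+1$ subalgebras, and both reduce to the observation that the only non-permuting pairs are lines not through $z$ lying over distinct lines of $\mathfrak{h}(1)/\langle z\rangle$ (the paper's $p+1$ classes of size $p$). The only difference is bookkeeping: the paper sums permutizer sizes ($p^2+2p+4$ for each of the $p+4$ quasi-ideals, $2p+4$ for each of the other $p^2+p$ lines), whereas you subtract the $p^3(p+1)$ non-permuting pairs from the total, and your explicit arguments for why $2$-dimensional subalgebras must contain $z$ and why lines in different fibres fail to permute are more complete than the paper's.
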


The \textit{special linear Lie algebra} $\mathfrak{sl}_2(\mathbb{F}_q)$ of dimension two on $\mathbb{F}_{p^n}$ (with $p$ odd and $q=p^n$) represents another large family of finite dimensional Lie algebras on finite fields, where it is meaningful to study the subalgebra commutativity degree. This introduces our second main result.

\begin{theorem} \label{sdforsl2}
 If $p$ is an odd prime and $q=p^n$, then $\mathrm{sd}(\mathfrak{sl}_2(\mathbb{F}_q))$ has the same type of growth of $\mathrm{sd}(\mathfrak{h}(1))$ in Theorem \ref{heisenberg} replacing the role of $q$ with that of $p$, that is,   $$\mathrm{sd}(\mathfrak{sl}_2(\mathbb{F}_q))=\frac{3q^3+12q^2+16q+16}{{(q^2+2q+4)^2}}.$$
\end{theorem}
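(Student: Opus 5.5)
The plan is to list all subalgebras of $L=\mathfrak{sl}_2(\mathbb{F}_q)$ and then count directly the ordered pairs that fail to permute. The criterion $[A,B]\subseteq A+B$ makes most pairs trivially permutable, so only one class of pairs needs real counting.

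\emph{Step 1: the subalgebra lattice.} Subalgebras of dimension $0$ and $3$ contribute $1$ each. Every line $\langle v\rangle$ is a subalgebra, giving $q^2+q+1$ of them. The main obstacle is the two-dimensional subalgebras, which I claim are exactly the $q+1$ Borel subalgebras (stabilizers of a line of $\mathbb{F}_q^2$). For this I would use that $p$ is odd:
\begin{itemize}
\item The centralizer of any nonzero $x\in L$ is $\langle x\rangle$. This can be checked on the normal forms of $x$ up to conjugation: split semisimple, nonsplit semisimple, or nilpotent. Hence $L$ has no two-dimensional abelian subalgebra.
\item So a two-dimensional subalgebra $B$ is nonabelian, and $B=\langle h,e\rangle$ with $[h,e]=e$.
\item Then $\mathrm{ad}\,e$ is nilpotent on $L$, so $e$ is a nilpotent matrix. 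Therefore $B\subseteq N_L(\langle e\rangle)$, and this normalizer is the Borel subalgebra fixing $\ker e$. Equality follows by dimension.
\item Conversely, each of the $q+1$ points of $\mathbb{P}^1(\mathbb{F}_q)$ gives a Borel subalgebra $\mathfrak{b}$, and distinct points give distinct ones.
\end{itemize}
This yields
\[|\mathcal{L}(L)|=1+(q^2+q+1)+(q+1)+1=q^2+2q+4,\]
which matches the denominator.

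I would also record three incidence facts:
\begin{itemize}
\item each Borel subalgebra contains exactly $q+1$ lines;
\item two distinct Borel subalgebras meet in a line (the Cartan subalgebra of diagonal type) and sum to $L$;
\item consequently, a pair of distinct lines lies in at most one Borel subalgebra.
\end{itemize}

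\emph{Step 2: the permutable pairs.} A pair $(A,B)$ with $A\subseteq B$ or $B\subseteq A$ permutes, and this includes every pair with $0$ or $L$ as a member. If $A+B=L$, the condition $[A,B]\subseteq A+B$ is automatic. This covers two distinct Borel subalgebras, and a line together with a Borel subalgebra not containing it. For two distinct lines $A,B$, the sum $A+B$ is a plane, and it contains $[A,B]$ exactly when $A+B$ is a subalgebra, i.e.\ a Borel subalgebra. So the only non-permutable ordered pairs are pairs of distinct lines not lying in a common Borel subalgebra.

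\emph{Step 3: the count.} There are $(q^2+q+1)(q^2+q)$ ordered pairs of distinct lines. By the incidence facts in Step 1, exactly $(q+1)\cdot(q+1)q$ of these lie in a common Borel subalgebra. The number of bad pairs is therefore
\[(q^2+q+1)(q^2+q)-q(q+1)^2=q^3(q+1).\]
Hence
\[\mathrm{sd}(L)=\frac{(q^2+2q+4)^2-q^4-q^3}{(q^2+2q+4)^2}.\]
Expanding $(q^2+2q+4)^2=q^4+4q^3+12q^2+16q+16$ gives the stated formula $\frac{3q^3+12q^2+16q+16}{(q^2+2q+4)^2}$.

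The same shape of lattice appears for $\mathfrak{h}(1)$. There the $p+1$ two-dimensional subalgebras are the abelian planes containing $z$, and they pairwise meet in $\langle z\rangle$. Two distinct such planes also sum to $L$, so the count is formally identical, which explains the coincidence of the two formulas under $p\leftrightarrow q$.
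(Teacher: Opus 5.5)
Your proof is correct, and it reaches the formula by a genuinely different count from the paper's.

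\textbf{The paper's route.} It sums permutizer sizes subalgebra by subalgebra via Lemma~\ref{combinatorialformulat}. It sorts the $q^2+q+1$ lines into $q+1$ nilpotent, $\frac{q(q+1)}{2}$ split semisimple and $\frac{q(q-1)}{2}$ nonsplit semisimple ones. These lie in $1$, $2$ and $0$ Borel subalgebras respectively, which gives $|\mathcal{Q}_{\mathcal{L}(L)}(A)|=2q+4$, $3q+4$ and $q+4$. It then adds the $q+3$ quasi-ideals ($0$, $L$ and the Borel subalgebras), whose permutizer is all of $\mathcal{L}(L)$.

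\textbf{Your route.} You count the complement instead. The only failures are ordered pairs of distinct lines whose span is not a two-dimensional subalgebra, and a single double count gives $q^3(q+1)$ of them. This buys several things:
\begin{enumerate}[(i)]
\item No split/nonsplit census is needed.
\item You actually prove that the two-dimensional subalgebras are exactly the $q+1$ Borel subalgebras, which the paper takes for granted.
\item You isolate why the answer agrees with $\mathfrak{h}(1)$: for any three-dimensional Lie algebra over $\mathbb{F}_q$, $\mathrm{sd}$ depends only on the number of two-dimensional subalgebras.
\item It is a useful cross-check on the paper. In the paper's final display the multiplicities $(q+1)$ and $(q+3)$ of the first two terms are interchanged, so as printed the sum is $3q^3+10q^2+16q+16$. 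The intended sum $(q+3)(q^2+2q+4)+(q+1)(2q+4)+\frac{q(q+1)}{2}(3q+4)+\frac{q(q-1)}{2}(q+4)$ does agree with your $3q^3+12q^2+16q+16$.
\end{enumerate}
In exchange, the paper's route records the permutizer of each individual subalgebra, which is finer information than the total.

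\textbf{A step needing justification.} You assert that $\mathrm{ad}\,e$ is nilpotent when $[h,e]=e$, but this is not automatic in characteristic $p$. Lie's theorem is unavailable, and for $p=3$ the eigenvalues $0,\pm1$ of $\mathrm{ad}\,h$ exhaust $\mathbb{F}_3$, so the usual weight-raising argument does not close. A direct fix: cyclicity of the trace gives $\operatorname{tr}(e^2)=\operatorname{tr}(e[h,e])=0$. Cayley--Hamilton gives $e^2=-\det(e)I$ for traceless $e$, so $2\det e=0$. Since $p$ is odd, $\det e=0$ and $e$ is nilpotent.
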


The final main result testifies that the theory of the subgroup commutativity degree of finite groups (see \cite{sr1, sr2, or, mt}) matches perfectly the notion in Definition \ref{sdl}. To this scope, we recall from \cite{Khukhro1998} that the \textit{Lazard Correspondence} is a fundamental  tool in the study of nilpotent groups, since it provides a method  for classifying finite $p$-groups in connection with appropriate Lie algebras. We follow \cite{Khukhro1998} for the terminology regarding nilpotent Lie algebras, nilpotent $p$-groups and Lazard Correspondence. In particular,  D'Elb\'ee and others \cite[\S 2.3]{lazard} present a short review on the Lazard Correspondence which we report here; for instance,  suppose that a finite $p$-group $P$ has nilpotency class $c(P)=c$ (see definitions in \cite{Khukhro1998, rs} ) and that $c<p$, where $p$ is an odd prime. If $P$ has exponent $p$, then we may define $L_P$ to be an algebraic structure with same elements of $P$ but endowed with  operations of sum
\[g+_{L_P} h=h_1(g, h)=g h[g, h]^{-\frac{1}{2}}[g, [g, h]]^{-\frac{1}{12}}[h, [g, h]]^{\frac{1}{12}} \ldots  \] and of multiplication
\[
[g, h]_{L_P}=h_2(g, h)=[g, h][g, [g, h]]^{\frac{1}{2}}[h, [g, h]]^{\frac{1}{2}} \ldots \]
where the brackets on the right denote the group commutators in $P$ and the powers $1/12, -1/12$ and so on originate from  power series expansions, which are available in \cite[Chapters 9, 10]{Khukhro1998}.  The two operations above  are called \textit{Inverse Baker-Campbell-Hausdorff Formulas} in \cite[Lemma 10.7] {Khukhro1998}, since they allow to put the structure of a differential manifold on a Lie group.  Since the nilpotency class of $P$ is at most $c$, it turns out that both $h_1(g,h)$ and $h_2(g,h)$ are finite products of group commutators in $P$ and may be written as powers of commutators in the set of all  rational numbers $q \in \mathbb{Q}$ such that if $q=\frac{l}{m}$ is in reduced form then $\gcd(m, p)=1$ (these are the so-called $\mathbb{Q}$-\textit{powered groups} in \cite[Chapter 9]{Khukhro1998}). On the other hand  $P$ has  exponent $p$, hence the aobve two operations are well-defined on $L_P$ and produce a Lie algebra $L_P$.  Conversely, given a Lie algebra $L$ on $\mathbb{F}_p$ of nilpotency class at most $c$, one may define $P_L$ to be the structure with the same elements of $L$ but with  binary operation $$
a *_{P_L} b=H(a, b)=a+b+\frac{1}{2}[a, b]+\frac{1}{12}[a, [a, b]]-\frac{1}{12}[b, [a, b]]+\ldots
$$
which produces a finite $p$-group. Roughly speaking, above we have just written the \textit{Baker-Campbell-Hausdorff Formula}, where the brackets on the right-hand side are the Lie brackets of $L$. Again in the present situation $H(a,b)$ can be written as an appropriate finite linear combination (thus be viewed as an $\mathbb{F}_{p}$-linear combination of Lie monomials according to \cite{lazard}).

\begin{theorem} \label{lazardforp3} Suppose $L$ is a nilpotent finite dimensional Lie algebra on \(\mathbb{F}_{p}\) for any prime $p\ge 3$ with nilpotency class $c < p$. If $P$ is the corresponding $p$-group which appears via the Lazard Correspondence, then  $$\mathrm{sd}(L)= \frac{|\{(H,K) \in \mathrm{L}(P) \times \mathrm{L}(P) \mid HK=KH\}|}{|\mathrm{L}(P)|^2},$$ where $\mathrm{L}(P)$ is the lattice of all subgroups of $P$
and this is exactly the subgroup commutativity degree of $P$.  In other words, the subalgebra commutativity degree is transformed in the subgroup commutativity degree by the Lazard Correspondence. 
\end{theorem}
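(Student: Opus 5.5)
The plan is to show that the Lazard Correspondence gives a bijection $\mathcal{L}(L)\to \mathrm{L}(P)$ that carries permutable pairs of subalgebras exactly to permuting pairs of subgroups. The identity for $\mathrm{sd}(L)$ then follows by counting.

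\textbf{Step 1: the lattice bijection.} Since $P=P_L$ has the same underlying set as $L$, I would show that a subset $A\subseteq L$ is a subalgebra if and only if it is a subgroup of $P$. This is the standard fact from \cite[Chapters 9, 10]{Khukhro1998}.
\begin{itemize}
\item If $A$ is a subalgebra, it is closed under $H(a,b)$, because $H$ is an $\mathbb{F}_p$-linear combination of Lie monomials in $a,b$. The inverse of $a$ in $P$ is $-a$, which lies in $A$.
\item Conversely, if $A$ is a subgroup, it is closed under the inverse formulas $h_1$ and $h_2$, since these are products of powers of group commutators (the powers are legitimate because $c<p$).
\item Closure under scalar multiplication by $\mathbb{F}_p$ comes from $ka = a^k$ in $P$.
\end{itemize}
This gives a lattice isomorphism $\mathcal{L}(L)\cong \mathrm{L}(P)$ that preserves inclusions. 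In particular $|\mathcal{L}(L)|=|\mathrm{L}(P)|$ and $\langle A,B\rangle_{\mathrm{Lie}}=\langle A,B\rangle_{\mathrm{grp}}$, because both are the smallest element of the same lattice above $A$ and $B$.

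\textbf{Step 2: the cardinality criterion.} The key step is to show that for subalgebras $A,B$ we have $A\cup B=A+B$ if and only if $AB=BA$ in $P$. I would express both conditions through sizes.
\begin{itemize}
\item On the Lie side, $A$ and $B$ permute iff $|\langle A,B\rangle| = |A+B| = |A||B|/|A\cap B|$, using $\dim(A+B)=\dim A+\dim B-\dim(A\cap B)$.
\item On the group side, $HK=KH$ iff $HK$ is a subgroup, iff $HK=\langle H,K\rangle$. Since always $|HK|=|H||K|/|H\cap K|$ and $HK\subseteq\langle H,K\rangle$, this is equivalent to $|\langle H,K\rangle|=|H||K|/|H\cap K|$.
\end{itemize}
By Step 1 the intersections agree as sets ($A\cap B$ is the same set in both structures) and the generated objects agree. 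Hence the two numerical conditions coincide. This avoids having to compare the sets $A+B$ and $AB$ directly.

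\textbf{Step 3: counting.} The bijection $\mathcal{L}(L)\times\mathcal{L}(L)\to\mathrm{L}(P)\times\mathrm{L}(P)$ maps the pairs with $[A,B]\subseteq A+B$ onto the permuting pairs. By Amayo's criterion recalled in the introduction, $[A,B]\subseteq A+B$ is equivalent to $A$ and $B$ being permutable. Dividing by $|\mathcal{L}(L)|^2=|\mathrm{L}(P)|^2$ gives the statement.

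The main obstacle I expect is Step 1 in the converse direction: that every subgroup of $P$ is an $\mathbb{F}_p$-subspace closed under the Lie bracket. This needs the inverse Baker--Campbell--Hausdorff formulas to have coefficients with denominators coprime to $p$, together with the existence of $\mathbb{Q}$-powers, which is where $c<p$ is used. I would cite \cite[Lemma 10.7]{Khukhro1998} and \cite[\S 2.3]{lazard} for this rather than reprove it.
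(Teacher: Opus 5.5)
Your proposal is correct and follows the paper's route. The paper likewise identifies $\mathcal{L}(L)$ with $\mathrm{L}(P)$ via the Lazard Correspondence and decides permutability by comparing $|HK|=p^{\dim A+\dim B-\dim(A\cap B)}$ with $|\langle H,K\rangle|=p^{\dim\langle A,B\rangle}$. Your Step 1 justifies the subgroup--subalgebra correspondence that the paper only asserts, and your use of $HK\subseteq\langle H,K\rangle$ and $A+B\subseteq\langle A,B\rangle$ makes the chain of equivalences cleaner than the paper's.
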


After a selection of results from the large literature on modularity conditions in Lie algebras in Section \ref{sec2}, we develop the main results concerning the subalgebra commutativity degree in Section \ref{sec3}, overlapping methods and ideas of \cite{sr1, sr2, or, mt} but showing also some limits which we have in the context of Lie algebras. Finally the main proofs are placed in Section \ref{sec4}.

\section{A short review on modular conditions for finite dimensional Lie algebras} \label{sec2}

The only  known examples of modular subalgebras, which are not quasi-ideals, are the one dimensional subalgebras of the three dimensional nonsplit simple Lie algebras and the standard maximal subalgebra of certain Hamiltonian Lie algebras. As indicated by Strade \cite{strade1}, a \textit{simple Lie algebra} is a Lie algeba whose ideals are only the trivial ideals.  On the other hand, we say that a Lie algebra $L$ splits if it contains a nontrivial ideal $I$ and a subalgebra $J$ such that $L=I +J$. When this doesn't happen, $L$ is said to be \textit{nonsplit}. The formal definition of Hamiltonian Lie algebra is quite technical, so we omit it and refer to \cite[\S 4.2]{strade1}, as well as the classification of Cartan of \textit{standard Lie algebras} and \textit{nonstandard Lie algebras}, see \cite{strade1}. Let's only note  that for the three dimensional nonsplit simple Lie algebras, we can find Lie algebras of the form   $$L=\langle e\rangle + \langle f\rangle  + \langle g\rangle$$ on $\mathbb{F}$ containing no pairs of elements $(\alpha, \beta) \in \mathbb{F} \times \mathbb{F}$ such
that $$\alpha^2 +\beta^2=-1  \  \ \mbox{and} \ \  [e,f]=g, [f,g]=e, [e,g]=f.$$ In this situation  $L$ is simple, and every one dimensional subalgebra of $L$ is maximal and modular in $L$, but not a quasi-ideal.
It is also clear that the lattice of all ideals $\mathcal{I}(L)$ of a Lie algebra $L$ is a modular sublattice in $ \mathcal{L}(L)$. The smallest nonmodular lattice is of course the pentagonal lattice $N_5=\{0, A,B,C, L\}$ with $B \subseteq C $  such that
$\langle B,A\cap C \rangle= \langle B, 0 \rangle=B < C=L\cap C= \langle B,A \rangle \cap C $
holds, contradicting the modular law. However each nonmodular lattice contains a copy of $N_5$ as a sublattice; this is a well known fact in topological algebra, see \cite{hhr, rs}.

\begin{definition} [See \cite {kolman1965}, Almost abelian Lie algebras]\label{almostabelian}
A  Lie algebra on $\mathbb{F}$ of dimension $n+1$ is called \textit{almost abelian}, if it has a basis $e_0, e_1, \cdots, e_n$ such that $[e_i,e_0]=e_i$ for $i\geq 1$ and $[e_i,e_j]=0$ for  $i, j\geq 1$. 
\end{definition}

One can see that an almost abelian Lie algebra $L$ is of the form of a vector space sum  $L=L^2 + \mathbb{F}u$, where the \textit{derived subalgebra} $L^2 =[L, L]$ is abelian and the adjoint map \[\mathrm{ad}_x : y \in L \mapsto \mathrm{ad}_x(y) \in L\] is of the form $\mathrm{ad}_x(y)=[x,y]=y $ for all $y\in L^2$. This  means that $L$ contains an abelian ideal of codimension one, on which it acts by scalar multiplications. Kolman \cite [Proposition 1.1]{kolman1965} showed that  any   almost abelian Lie algebra $L$ of dimension $n+1$ is solvable with  \textit{nilpotent radical} 
$$\mathrm{Nil}(L)=\langle I \in \mathcal{I}(L) \mid I \ \mbox{is nilpotent}\rangle$$
which is  abelian. Note that $\mathrm{Nil}(L)$ is the  largest nilpotent ideal of $L$. There are a series of further useful results on finite dimensional modular Lie algebras in \cite{kolman1965}.  Following Kolman and others \cite{bowman2004, elduque1, elduque2, kolman1965, lashi, david1981}, we say that  a finite dimensional Lie algebra $L$ on $\mathbb{F}$ is upper semi-modular, lower semi-modular, or modular, if so is the lattice $\mathcal{L}(L)$.      Let's see more precisely the meaning of this notions, according with \cite{boto,  gein, kolman1965}. 

\begin{definition}[See \cite{boto, gein, kolman1965}, Upper/lower semi-modular/modular Lie algebras] \label{uls}
We say that  $U$ is \textit{upper semi-modular} in a finite dimensional Lie algebra $L$ on $\mathbb{F}$,  if for every subalgebra $S$ of $L$ such that $U \cap L$ is maximal in $U$ and $S$, then $U$ and $S$ are maximal in $\langle U, S \rangle$. We say that $L$ is \textit{completely upper semi-modular} if every subalgebra of $L$ is upper semi-modular in $L$. We say that $U$ is \textit{upper modular} in $L$, if $U$ is maximal in $\langle U, S \rangle$ for every subalgebra $S$ of $L$ such that $S\cap L$ is maximal in $S$. We say that $L$ is \textit{completely upper modular} if every subalgebra of $L$ is upper modular in $L$. Dually,  one can give the notion of  $U$  \textit{lower semi-modular} in $L$,  
of $L$  \textit{completely lower semi-modular},  
of $U$ \textit{lower modular} in $L$, of $L$ \textit{completely lower modular}.
\end{definition}

Note  that a nilpotent finite dimensional Lie algebra $L$ on a field of any characteristic must be lower semi-modular.   Moreover from \cite{kolman1965} we have that a Lie algebra $L$ on a field of any characteristic is distributive if and only if $L$ is one dimensional.  Also the role of the maximal subalgebras is relevant in this discusion: if all maximal subalgebras of $L$ are one dimensional, then $L$ is two dimensional quasiabelian, or three dimensional  nonsplit simple from \cite[Lemma 5.8]{boto}.

\begin{example} Looking at \cite{elduque1, elduque2, strade1}, nonsplit simple finite dimensional Lie algebras are called  \textit{special simple Lie algebras}, and a prominent example is given by the nonsplit simple three dimensional  Lie algebra $\mathbb{A}_1$. In fact, one can check that among simple three dimensional Lie algebras the condition of being nonsplit is equivalent to that of being upper semi-modular. In particular,  $\mathbb{A}_1$ turns out to be  modular, that is, both upper semi-modular and lower semi-modular.\end{example}  

A first relevant classification appears in the works of Kolman \cite{kolman1965}, see below.

\begin{proposition}[See \cite{kolman1965}, Theorem 2.4] A Lie algebra $L$ is upper semi-modular if and only if $L$ is abelian, almost abelian, or special simple.
\end{proposition}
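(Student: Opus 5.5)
Since this proposition is cited from Kolman (Theorem 2.4), I would reconstruct its proof in two directions, treating ``upper semi-modular'' in the sense of Definition \ref{uls}: whenever $U\cap S$ is maximal in both $U$ and $S$, both $U$ and $S$ are maximal in $\langle U,S\rangle$.

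\textbf{Sufficiency.} I would check each of the three classes directly.
\begin{enumerate}[(i)]
\item If $L$ is abelian, then $\mathcal{L}(L)$ is the lattice of subspaces. This lattice is modular, hence upper semi-modular.
\item If $L$ is almost abelian with basis $e_0,\dots,e_n$ as in Definition \ref{almostabelian}, write $A=\langle e_1,\dots,e_n\rangle$. Every subspace of $A$ is a subalgebra. Every subalgebra not contained in $A$ has the form $\langle a+e_0\rangle\oplus W$ with $W\subseteq A$ a subspace, for some $a\in A$. For such subalgebras one has $[a+e_0,w]=-w\in W$, so $\langle U,S\rangle=U+S$ in all cases. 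Since any two subalgebras permute, upper semi-modularity follows from the dimension formula.
\item If $L$ is special simple, then every proper nonzero subalgebra is one dimensional and maximal, by \cite[Lemma 5.8]{boto} and the discussion preceding the proposition. The lattice therefore has length $2$ and is trivially upper semi-modular.
\end{enumerate}

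\textbf{Necessity.} This is the substantive direction. I would use two structural facts about upper semi-modular $L$, both inherited by subalgebras and by quotients.
\begin{enumerate}[(a)]
\item All maximal chains in $\mathcal{L}(L)$ have the same length. This is the Jordan--Dedekind condition for semi-modular lattices of finite length.
\item For $x\notin\langle y\rangle$, the algebra $\langle x,y\rangle$ has dimension $2$. This follows by applying the definition with $U=\langle x\rangle$, $S=\langle y\rangle$ and $U\cap S=0$: both are then maximal in $\langle x,y\rangle$.
\end{enumerate}
Fact (b) implies that every two-generated subalgebra is at most two dimensional.

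\emph{Case $L$ solvable.} Take a minimal ideal $I$, which is abelian. Using (b), show that for any $x$ and any $u\in I$ one has $[x,u]\in\langle x,u\rangle\cap I$. This forces $\mathrm{ad}_x$ to act on $I$ by scalars, i.e.\ every subspace of the abelian part is $\mathrm{ad}$-invariant. Then argue by induction on $\dim L$. If $[L,L]$ is zero, $L$ is abelian. Otherwise pick $x$ with $\mathrm{ad}_x$ nonzero on the abelian ideal. Show that $\mathrm{ad}_x$ acts as a nonzero scalar $\lambda$ on the whole codimension one abelian ideal, and rescale $e_0=\lambda^{-1}x$ to get the almost abelian presentation.

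\emph{Case $L$ not solvable.} Suppose $L$ has a proper nonzero ideal; then induction on quotients and subalgebras (all abelian, almost abelian or special simple) yields a contradiction via (a). So $L$ is simple. Any two-dimensional subalgebra would produce, together with the grading by a split torus, chains of different lengths. Hence all maximal subalgebras are one dimensional, and \cite[Lemma 5.8]{boto} gives that $L$ is three dimensional nonsplit simple.

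\textbf{Main obstacle.} I expect the hardest step to be the nonsolvable case: excluding simple algebras of dimension greater than $3$ and split $\mathfrak{sl}_2$. Here I would first exhibit the Borel subalgebra $\langle h,e\rangle$ and the chains $0<\langle e\rangle<\langle h,e\rangle<L$ and $0<\langle h\rangle<L$. The latter is maximal because $\langle h,e+f\rangle=L$, so these maximal chains have different lengths, violating (a). I would then use a toral element to build such a chain in any nonsplit-free situation.
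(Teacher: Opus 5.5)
The paper does not prove this proposition; it quotes it from Kolman. Your sketch therefore has to stand on its own, and the necessity half does not.

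The sufficiency half is fine in substance. For (ii), the clean reason is the identity $[u,s]=\beta u-\alpha s$ for $u=\alpha e_0+a$, $s=\beta e_0+b$, which shows every subspace is a subalgebra. For (iii), Lemma 5.8 of \cite{boto} is the converse of what you need. The relevant fact is that a three-dimensional nonsplit simple algebra has no two-dimensional subalgebra.

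\textbf{Fact (b) and the solvable case.} Fact (b) is false. The definition applied to $U=\langle x\rangle$, $S=\langle y\rangle$ only says both lines are \emph{maximal} in $\langle x,y\rangle$, and that does not bound $\dim\langle x,y\rangle$. Your own class (iii) is a counterexample: there any two independent elements generate the whole three-dimensional algebra.

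Your solvable case rests on (b). The correct substitute is an ideal lemma. Let $I\neq L$ be an ideal, $0\neq u\in I$ and $x\notin I$. Then $\langle x,u\rangle\cap I$ is a subalgebra containing $\langle u\rangle$ but not $x$, so maximality of $\langle u\rangle$ in $\langle x,u\rangle$ gives $\langle x,u\rangle\cap I=\langle u\rangle$. Hence $[x,u]\in\langle u\rangle$, so $\operatorname{ad}_x|_I=\lambda(x)\,\mathrm{id}_I$, and the Jacobi identity yields $\lambda(x)[I,I]=0$.

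Even granting this, the rest of your solvable case is not an argument. You never produce the codimension one abelian ideal. You also miss the case where all the scalars vanish, for instance a minimal ideal inside the centre as in the Heisenberg algebra. There a separate contradiction is needed: $\langle x\rangle\subsetneq\langle x,[x,y]\rangle\subsetneq\langle x,y\rangle$ whenever $[x,y]\neq0$ is central.

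A complete route is to apply the lemma to $J=[L,L]$. If $J$ is not central, some $\lambda(x)\neq0$, so $J$ is abelian and $\lambda$ is a nonzero linear functional vanishing on $J$. Then apply the lemma to the ideal $K=\ker\lambda\supseteq J$. This gives $\operatorname{ad}_x|_K=\mathrm{id}_K$ when $\lambda(x)=1$, which forces $K$ to be abelian, so $L$ is almost abelian.

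\textbf{The nonsolvable case.} This half is essentially missing, and its one concrete step is wrong. In $\mathfrak{sl}_2$ the line $\langle h\rangle$ is \emph{not} maximal, because $\langle h\rangle\subsetneq\langle h,e\rangle\subsetneq L$. The equality $\langle h,e+f\rangle=L$ does not make it maximal. What that equality gives is a contradiction straight from the definition: with $U=\langle h\rangle$ and $S=\langle e+f\rangle$, upper semi-modularity would force $\langle h\rangle$ to be maximal in $L$.

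Fact (a) cannot do this job. Over an algebraically closed field every element of $\mathfrak{sl}_2$ lies in a Borel subalgebra, so all maximal chains in $\mathcal{L}(\mathfrak{sl}_2)$ have length $3$, although the lattice is not upper semi-modular. The remaining phrases (``induction yields a contradiction via (a)'', ``grading by a split torus'', ``a toral element in any nonsplit-free situation'') are placeholders for exactly the part that carries the content.

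One workable route uses characteristic-zero tools; I believe that is Kolman's setting, compare the hypothesis in Proposition \ref{classificationusm}.
\begin{enumerate}[(1)]
\item By the ideal lemma, every proper ideal $J$ of a nonsolvable upper semi-modular $L$ is abelian, and $L$ acts on it through a functional $\lambda_J$ vanishing on $[L,L]$. If $\lambda_J\neq0$, then $\ker\lambda_J$ is an abelian ideal of codimension one and $L$ is solvable. So every proper ideal is central.
\item Levi's theorem then gives $L=\mathfrak{s}\oplus Z(L)$ with $\mathfrak{s}$ simple. Here $Z(L)\neq0$ is impossible: for generators $a,b$ of $\mathfrak{s}$ and $0\neq z\in Z(L)$ one gets $\langle a\rangle\subsetneq\mathfrak{s}\subsetneq\langle a,b+z\rangle$, contradicting the maximality of $\langle a\rangle$ in $\langle a,b+z\rangle$.
\item For simple $L$, take a generating pair $(x,y)$ with $x$ regular semisimple. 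Maximality of $\langle x\rangle$ in $L=\langle x,y\rangle$ forces the Cartan subalgebra $C_L(x)$ to equal $\langle x\rangle$. So $L$ has rank one and dimension $3$, and the split case is excluded by the direct argument above.
\end{enumerate}
Your sketch uses no hypothesis on the field at all, which signals that this step has not really been confronted.
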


If $L$ is a finite dimensional  Lie algebra, then one can define the  notion of  \textit{solvable radical}  $$\mathrm{Sol}(L)=\langle I \in \mathcal{I}(L) \mid I \ \mbox{is solvable}\rangle$$
of $L$ and of course  $\mathrm{Nil}(L) \subseteq \mathrm{Sol}(L)$, but in general the inclusion can be proper. Also $\mathrm{Sol}(L)$ can be introduced more conveniently as the  largest solvable ideal of $L$. It happens that for a finite dimensional upper semi-modular Lie algebra $L$,  either $ L=\mathrm{Sol}(L)$, that  is, $L$ is solvable, or, if this doesn't happen, thn $L/\mathrm{Sol}(L)$ is a special simple Lie algebra. 
A \textit{Borel subalgebra}, usually denoted as \(\mathfrak{b}\), of a Lie algebra $L$ is defined as a maximal solvable subalgebra. 
For $\mathfrak{sl}_2(\mathbb{F}_q)$, all Borel subalgebras are conjugate and two dimensional. An element $x \in L$ is called nilpotent if $\operatorname{ad} x$ is a nilpotent linear map, and semisimple if $\operatorname{ad} x$ is a semisimple linear map. In $\mathfrak{sl}_2(\mathbb{F}_q)$, a semisimple element is split if its characteristic polynomial splits over $\mathbb{F}_q$, and nonsplit otherwise.

\begin{remark}Note that a classical result of Cartan \cite{strade1} on the classification of finite dimensional Lie algebras shows that we  have a decomposition of the form $L= H + [L,L]$, where $H$ is the Cartan subalgebra of $L$. In particular, we may use Cartan's Theorem and  \cite [Theorem 2.3] {kolman1965}, in order to find that  solvable upper semi-modular Lie algebras are either  abelian or almost abelian.  We can say more: abelian, almost abelian and special simple Lie algebras are  modular. Therefore  upper semi-modular Lie algebras on a field of characteristic zero must be modular. \end{remark}

\begin{proposition}[See \cite{kolman1965}, Proposition 3.1]  A simple three dimensional Lie algebra $L$ on $\mathbb{F}$ must be of one (and only one) of the following types:
\begin{itemize}
    \item [(i).] If $L$ is nonsplit, then $L$ is lower semi-modular;
    \item [(ii).] If $\mathbb{F}$ is algebraically closed, then $L$ is lower semi-modular.
\end{itemize}
\end{proposition}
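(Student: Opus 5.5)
The plan is to reduce everything to a description of $\mathcal{L}(L)$ and then check the lower semi-modular condition directly. That condition, dual to Definition \ref{uls}, says: whenever two subalgebras $A,B$ are both maximal in $\langle A,B\rangle$, the intersection $A\cap B$ is maximal in $A$ and in $B$.

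\emph{Structure of $L$.} A simple three dimensional Lie algebra is a form of $\mathfrak{sl}_2$. I will quote this from the standard classification used in \cite{kolman1965}, assuming $\mathrm{char}(\mathbb{F})\neq 2$ or else using Kolman's description of the three dimensional simple algebras. Two facts drive the case split:
\begin{itemize}
\item[(a)] $L$ is split exactly when it contains a nonzero $\mathrm{ad}$-nilpotent element. In that case $L\cong\mathfrak{sl}_2(\mathbb{F})$ with a basis $h,e,f$.
\item[(b)] Every two dimensional subalgebra $S$ of $L$ is solvable. Its derived algebra $[S,S]$ is nonzero, because an abelian two dimensional subalgebra of a simple three dimensional algebra is impossible: its centralizer would be too large. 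So $[S,S]=\mathbb{F}x$ for some $x$, and then $\mathrm{ad}_x$ is nilpotent on $L$.
\end{itemize}

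\emph{Case (i), $L$ nonsplit.} By (a) and (b), $L$ has no two dimensional subalgebras. Hence $\mathcal{L}(L)$ consists of $0$, the one dimensional subalgebras, and $L$; it is a lattice of length $2$ of type $M_k$. Two distinct lines $A,B$ generate $L$ and are maximal in it, and $A\cap B=0$ is maximal in each. Pairs involving $0$ or $L$ are trivial. So $L$ is lower semi-modular; in fact it is modular, consistent with the Example on $\mathbb{A}_1$.

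\emph{Case (ii), $\mathbb{F}$ algebraically closed.} Here $L\cong\mathfrak{sl}_2(\mathbb{F})$. The key step is to show that every element of $L$ lies in a Borel subalgebra, i.e. in a two dimensional subalgebra $\langle h',e'\rangle$ with $e'$ nilpotent. This holds because every $x\in L$ is either nilpotent or semisimple with eigenvalues in $\mathbb{F}$, so $\mathrm{ad}_x$ has a nilpotent eigenvector $e'$ with $[x,e']=\lambda e'$.

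It follows that the maximal subalgebras of $L$ are exactly the two dimensional ones, and every line lies in such a plane. Now take $A,B$ maximal in $C=\langle A,B\rangle$ and argue by $\dim C$:
\begin{itemize}
\item If $C=L$, then $A,B$ are distinct planes. Then $\dim(A\cap B)\ge 2+2-3=1$, and $A\cap B$ is a line, which is maximal in each plane.
\item If $\dim C=2$, then $A,B$ are distinct lines and $A\cap B=0$ is maximal in both.
\item If $\dim C\le 1$, the condition holds trivially.
\end{itemize}

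\emph{Main obstacle.} The delicate step is (b) together with the claim in case (ii) that every element lies in a Borel subalgebra. This is exactly where the hypotheses enter. Over a non-closed field, a split $L$ can contain a nonsplit semisimple element $x$, meaning one whose characteristic polynomial is irreducible. Then $\mathbb{F}x$ is a maximal line, and pairing it with a Borel $B$ gives $\mathbb{F}x\cap B=0$, which is not maximal in $B$. So lower semi-modularity fails there, which explains why (ii) needs algebraic closure. I would first verify (b) by an explicit computation in the basis $h,e,f$ (respectively in a nonsplit basis $e,f,g$ as in Section \ref{sec2}). I would also check separately the low characteristic $p=3$ case for the eigenvector argument.
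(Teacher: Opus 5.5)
The paper gives no argument for this proposition; it simply quotes \cite[Proposition 3.1]{kolman1965}. Your direct analysis of $\mathcal{L}(L)$ is therefore an independent, self-contained proof. It is correct for $\mathrm{char}(\mathbb{F})\neq 2$, provided \emph{nonsplit} has the classical Jacobson--Kolman meaning: $L\not\cong\mathfrak{sl}_2(\mathbb{F})$, equivalently no $\mathrm{ad}_y$ has a nonzero eigenvalue in $\mathbb{F}$. You are right to use that meaning. Under the literal definition of splitting in Section~\ref{sec2}, every simple algebra would be nonsplit, and your final paragraph would then refute (i).

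The steps you flag as delicate are easier than you fear, and are characteristic-free.
\begin{itemize}
\item[(1)] \emph{No abelian planes.} An abelian plane $S$ is impossible because $L=S\oplus\mathbb{F}z$ would give $\dim[L,L]\le 2$, contradicting $L=[L,L]$. This is more precise than ``its centralizer would be too large''.
\item[(2)] \emph{Nilpotency of $\mathrm{ad}_x$.} If $[y,x]=x$ and $y,x,w$ is a basis, the $w$-coefficient of the Jacobi identity for $y,x,w$ forces $[x,w]\in\mathbb{F}x+\mathbb{F}y$. Hence $(\mathrm{ad}_x)^3=0$ in every characteristic.
\item[(3)] \emph{Case (i) without (a).} With the eigenvalue definition of splitting you can bypass (a) and the nilpotency claim altogether. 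The relation $[y,x]=x$ already says $\mathrm{ad}_y$ has the eigenvalue $1\in\mathbb{F}$, so a nonsplit $L$ has no two dimensional subalgebras.
\end{itemize}

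In case (ii), your eigenvector argument does not literally cover a nonzero nilpotent $x$. For such $x$ the eigenvectors of $\mathrm{ad}_x$ are only the multiples of $x$, so $\mathbb{F}x+\mathbb{F}e'$ is not a plane. A uniform fix is to take an eigenvector $v+\mathbb{F}x$ of the map induced by $\mathrm{ad}_x$ on $L/\mathbb{F}x$. Then $\mathbb{F}x+\mathbb{F}v$ is a two dimensional subalgebra containing $x$, and nothing special happens in characteristic $3$.

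Finally, in the split case your argument only uses that every element of $\mathbb{F}$ is a square. Combined with your counterexample, it gives a sharper criterion than (ii): a split $L$ is lower semi-modular if and only if $\mathbb{F}=\mathbb{F}^2$. This also shows the ``one and only one type'' phrasing is not an exhaustive classification.
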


We may confirm the intuition that the concepts  in Definition \ref{uls} describe classes of finite dimensional Lie algebras which are close to be abelian Lie algebras.

\begin{proposition}[See \cite{boto}, Theorem 4.5] \label{classificationusm} Let $L$ be a finite dimensional Lie algebra on $\mathbb{F}$ of $\mathrm{char}(\mathbb{F})=0$. Then the following  conditions are equivalent:
 \begin{itemize}
     \item [(i).]  $L$ is completely upper modular; 
     \item [(ii).] $L$ is completely upper semimodula;
     \item [(iii).] $L$ is either abelian, or almost abelian or a three dimensional nonsplit simple Lie algebra.
 \end{itemize}
Also the following  conditions are equivalent:
  \begin{itemize}
     \item [(i).]  $L$ is completely lower modular;
     \item [(ii).] $L$ is completely lower semi-modular;
     \item [(iii).] $L=R \oplus S_1 \oplus \cdots \oplus S_k$, where $R=\mathrm{Sol}(L)$ is supersolvable and  $S_1, \ldots, S_k$  are non-isomorphic three dimensional simple Lie algebras.
 \end{itemize}
\end{proposition}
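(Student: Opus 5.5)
The equivalences concern completely upper/lower (semi-)modular Lie algebras. The plan is to prove each chain cyclically as (i)$\Rightarrow$(ii)$\Rightarrow$(iii)$\Rightarrow$(i). In both chains, (i)$\Rightarrow$(ii) should be formal: if $U$ is upper modular, then whenever $U\cap S$ is maximal in both $U$ and $S$, the subalgebra $S$ is in particular one for which $U$ is maximal in $\langle U,S\rangle$. By symmetry of the hypothesis in $U$ and $S$, both are maximal in $\langle U,S\rangle$. The lower case is the dual argument.

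For the upper chain, (ii)$\Rightarrow$(iii) will be reduced to Kolman's classification (Theorem 2.4 of \cite{kolman1965}, quoted above): a completely upper semi-modular $L$ is abelian, almost abelian, or special simple. It remains to show that in characteristic zero a special simple algebra is three dimensional nonsplit. The route is to show that every maximal subalgebra is one dimensional and then invoke \cite[Lemma 5.8]{boto}. Upper semi-modularity forces the subalgebra lattice to be graded. A simple algebra in characteristic zero with a maximal subalgebra of dimension $\ge 2$ would contain a two dimensional solvable or toral subalgebra, and from it one can build a chain of non-uniform length, contradicting gradedness.

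For (iii)$\Rightarrow$(i), each case is checked directly.
\begin{itemize}
\item If $L$ is abelian, $\mathcal{L}(L)$ is the lattice of subspaces, which is modular.
\item If $L$ is almost abelian with $L=L^2+\mathbb{F}u$, then for $s=\alpha u+a$ and $t=\beta u+b$ one gets $[s,t]\in\mathbb{F}a+\mathbb{F}b$, which lies in $\mathrm{span}(s,t)$. So every subspace is a subalgebra and again $\mathcal{L}(L)$ is the modular subspace lattice.
\item If $L$ is three dimensional nonsplit simple, then $\mathcal{L}(L)=\{0\}\cup\{\text{lines}\}\cup\{L\}$ is of type $M_n$, which is modular.
\end{itemize}

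For the lower chain, (iii)$\Rightarrow$(i) goes in two steps. For supersolvable $R$, the subalgebra lattice is dually graded, since every maximal subalgebra of a subalgebra has codimension one, and this gives lower modularity. For $R\oplus S_1\oplus\cdots\oplus S_k$, a Goursat-type argument should show that every subalgebra decomposes compatibly with the summands. The point is that the $S_i$ are simple, pairwise non-isomorphic, and have no homomorphic images in common with the solvable $R$, so no diagonal subalgebras arise. Lower modularity then follows summand by summand.

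The main obstacle is (ii)$\Rightarrow$(iii). The plan is to take a Levi decomposition $L=R\rtimes S$ and prove three things.
\begin{enumerate}
\item Every simple summand of $S$ is three dimensional. In a simple algebra of rank $\ge2$, use a Borel subalgebra together with a root $\mathfrak{sl}_2$ to exhibit two subalgebras meeting in a common maximal subalgebra of both, whose join has the wrong codimension.
\item $S$ acts trivially on $R$. A nontrivial irreducible $\mathfrak{sl}_2$-module $V\subseteq R$ would produce, inside $\mathfrak{sl}_2\ltimes V$, a similar violation using a Cartan line and a weight vector.
\item $R$ is supersolvable. By Barnes' criterion it suffices that maximal subalgebras have codimension one, which comes from an induction on $\dim L$, since lower semi-modularity passes to subalgebras.
\end{enumerate}
Finally, if $S_i\cong S_j$ with $i\neq j$, the diagonal copy of $S_i$ inside $S_i\oplus S_j$ intersects $S_i$ trivially. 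Comparing it with a suitable maximal subalgebra should break lower semi-modularity, which yields the non-isomorphism condition.
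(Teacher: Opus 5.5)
The paper gives no proof here; it only quotes \cite[Theorem 4.5]{boto}, so your proposal has to stand on its own. Your upper chain is essentially a reduction to Kolman's classification plus \cite[Lemma 5.8]{boto}. The step ``special simple implies three dimensional nonsplit'' rests only on an unproved gradedness remark, which is thin but workable. The lower chain has genuine gaps.

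\emph{The direction (iii)$\Rightarrow$(i) of the lower chain cannot be proved as printed.} Your argument breaks exactly where it never checks the simple summands. Since $\mathcal{L}(A)$ is an interval of $\mathcal{L}(L)$, lower semi-modularity passes to subalgebras, so each $S_i$ must itself be lower semi-modular. Suppose $d\in\mathbb{F}$ is a non-square, e.g.\ $\mathbb{F}=\mathbb{Q}$ or $\mathbb{R}$, and take $\mathfrak{sl}_2(\mathbb{F})$ with the basis $e,f,h$ of the paper.
\begin{itemize}
\item The Borel subalgebra $\mathfrak{b}=\langle h,e\rangle$ is maximal.
\item The line $\mathbb{F}t$ with $t=e+df$ is also maximal. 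Every two dimensional subalgebra is a Borel, i.e.\ the stabilizer of a line of $\mathbb{F}^2$, while $t$ has characteristic polynomial $X^2-d$ and hence no eigenvector over $\mathbb{F}$.
\item Their meet is $0$, which is not maximal in $\mathfrak{b}$.
\end{itemize}
So $\mathfrak{sl}_2(\mathbb{F})$ has the form (iii) with $R=0$ but is not completely lower semi-modular. Condition (iii) therefore needs a restriction on the $S_i$; at least, split summands can occur only when $\mathbb{F}$ is quadratically closed.

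Independently, your Goursat claim is false. Goursat's lemma involves subalgebras of the summands and their quotients, and proper subalgebras of $R$ and of the $S_i$ have one dimensional quotients. Diagonal subalgebras therefore always occur: every line $\mathbb{F}(r+s)$ with $0\neq r\in R$, $0\neq s\in S_1$, or a diagonal line in $\ell_1\oplus\ell_2\subseteq S_1\oplus S_2$. So $\mathcal{L}(L)$ is not a product lattice, and lower modularity cannot be checked summand by summand; you need a direct analysis of coverings in $\mathcal{L}(L)$. Your dimension argument for supersolvable $R$ is fine.

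\emph{In (ii)$\Rightarrow$(iii) of the lower chain, step 1 uses the wrong kind of witness.} A pair whose meet is maximal in both but whose join covers neither violates \emph{upper} semi-modularity, and lower semi-modular algebras contain such pairs. In $\mathfrak{h}(1)$, which is nilpotent and hence lower semi-modular, $\langle x\rangle\cap\langle y\rangle=0$ is maximal in both, yet $\langle x\rangle\subset\langle x,z\rangle\subset\langle x,y\rangle=\mathfrak{h}(1)$. To contradict (ii) you need $U,B$ both maximal in $\langle U,B\rangle$ with $U\cap B$ not maximal in one of them. Over $\mathbb{C}$, a principal $\mathfrak{sl}_2$ and a parabolic subalgebra of $\mathfrak{sl}_3$ give such a pair.

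Also, Borel subalgebras, root $\mathfrak{sl}_2$'s and weight vectors presuppose split forms. Over a general field of characteristic zero the Levi factor may be anisotropic, as with $\mathfrak{su}(3)$, and a nonsplit $S$ acting on $R$, as in $\mathfrak{so}(3)\ltimes\mathbb{R}^3$, has no weight vectors. Steps 1 and 2 need other arguments there.

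Finally, the induction in step 3 only shows that proper subalgebras are supersolvable; it says nothing about maximal subalgebras of $L$ itself. In $L=\mathbb{R}^2\rtimes\mathbb{R}w$ with $w$ acting by a rotation, all proper subalgebras are abelian, yet $L$ is not supersolvable. What excludes it is lower semi-modularity of $L$ itself: $\mathbb{R}^2$ and $\mathbb{R}w$ are both maximal, but their meet $0$ is not maximal in $\mathbb{R}^2$.
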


It is well known that a finite group is supersolvable if and only if every maximal subgroup has prime index. The corresponding case of Lie algebras was studied by Barnes  \cite[Theorem 7]{barnes1967}.  

\begin{definition}[See \cite{barnes1967}, Supersolvable Lie algebras] 
 A Lie algebra $L$ of dimension $n$ is supersolvable if there exists a chain of ideals \(0 = L_0 \subset L_1 \subset L_2 \subset \dots \subset L_n = L\) such that \(\dim(L_i) = i\) for each $i$.
\end{definition}

It turns out that supersolvalbe Lie algebras can be characterized (in analogy with the case of finite groups) by the fact that   every maximal subalgebra has codimension one. These are essentially the Lie algebras which appear in Proposition \ref{classificationusm} (iii). It is also useful to recall that if 

\medskip

-- $\mathcal{B}$ is the class of finite dimensional Lie algebras $L$ on a prescribed $\mathbb{F}$ and having the property that all modular pairs of subalgebras of $L$ are permutable ;
\medskip

--  $\mathcal{Q}$ is the class of those finite dimensional Lie algebras $L$ on $\mathbb{F}$ with the property that $A$ permutes with $B$ whenever $A$ and $B$ are subalgebras of $L$  maximal in $A\cup B$; 

\medskip

then the following inclusion is satisfied (see \cite[Corollary 1.3]{david1981}) $$\mathcal{B}\subset \mathcal{Q} .$$ 
Moreover, Towers \cite[Theorem 2.4]{david1981}  proved  that  the additional condition that   $[L,L]$ is nilpotent implies the following equivalent conditions for finite dimensional Lie algebras $L$ on $\mathbb{F}$:  
 $$L\in \mathcal{B} \Longleftrightarrow L\in \mathcal{Q} \Longleftrightarrow L \ \ \mbox{is supersolvable}.$$
 In particular,  the special linear Lie algebra $\mathfrak{sl}_2(\mathbb{C})$ is in   $  \mathcal{Q}$, but it is not supersolvable.

\begin{remark} \label{generalization}
Lashi \cite[Theorems 1 and 2]{lashi} gave an important generalization to infinite dimensional Lie algebras in connection with Proposition \ref{classificationusm}; in fact he showed that any modular Lie algebra $L$ on  $\mathbb{F}$ is either abelian, or almost abelian,  or $\mathbb{A}_1$, or  infinite dimensional nonsplit simple in which every proper subalgebra is one dimensional. This  introduces the Lie algebra $P_{\infty}$  in \cite{burde1, burde2, lashi, strade1}. \end{remark}

\section{The algebraic combinatorics of the subalgebra commutativity degree}\label{sec3}

Here we show some results which overlap the ideas and the techniques which are used for the subgroup commutativity degree of finite groups, focusing on differences which are automatically present in the context of Lie algebras.

\begin{proposition} We have
 $0<\mathrm{sd}(L)\leq 1$ for every finite dimensional Lie algebra $L$ on $\mathbb{F}_{p^n}$. Moreover $\mathrm{sd}(L) = 1$ iff all subalgebras of $L$ are permutable. In particular, this happens when $\mathcal{I}(L)=\mathcal{L}(L)$. \end{proposition}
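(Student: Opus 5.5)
The plan is to argue directly from Definition \ref{sdl}. Since $L$ is finite dimensional over the finite field $\mathbb{F}_{p^n}$, it has only finitely many elements. Hence $\mathcal{L}(L)$ is a finite nonempty set: it contains at least $0$ and $L$. So the denominator $|\mathcal{L}(L)|^2$ is a positive integer, and $\mathrm{sd}(L)$ is well defined.

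The numerator counts a subset of $\mathcal{L}(L)\times\mathcal{L}(L)$, so it is at most $|\mathcal{L}(L)|^2$, which gives $\mathrm{sd}(L)\le 1$. For strict positivity it suffices to exhibit one permutable pair. The pair $(0,0)$ works, since $[0,0]=0\subseteq 0+0$. More generally, any pair $(A,A)$ works, because $A$ is a subalgebra and so $[A,A]\subseteq A=A+A$. Thus the numerator is at least $|\mathcal{L}(L)|\ge 1$, and in fact $\mathrm{sd}(L)\ge 1/|\mathcal{L}(L)|>0$.

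Next, $\mathrm{sd}(L)=1$ holds exactly when the numerator equals $|\mathcal{L}(L)|^2$. This means every pair $(A,B)$ satisfies $[A,B]\subseteq A+B$. By Amayo's characterization recalled in the introduction, this condition is equivalent to $A$ and $B$ being permutable, that is, $\langle A,B\rangle=A+B$.

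It remains to show that $\mathcal{I}(L)=\mathcal{L}(L)$ forces this. If every subalgebra is an ideal, then for all $A,B$ we have $[A,B]\subseteq A$ because $A$ is an ideal, and $A\subseteq A+B$. So every pair permutes, consistent with the first implication displayed in the introduction.

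The only step needing care is the equivalence between the bracket condition and permutability. Amayo's result is stated for subspaces, but I would still check it directly for subalgebras. If $[A,B]\subseteq A+B$, then $A+B$ is closed under brackets, since
\[
[a+b,a'+b']=[a,a']+[b,b']+[a,b']+[b,a'],
\]
where the first two terms lie in $A$ and $B$ respectively and the last two lie in $[A,B]\subseteq A+B$. So $A+B$ is a subalgebra containing $A$ and $B$, which forces $\langle A,B\rangle=A+B$. Conversely, if $\langle A,B\rangle=A+B$, then $[A,B]\subseteq\langle A,B\rangle=A+B$.
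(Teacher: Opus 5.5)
Your proposal is correct and takes the same route as the paper, whose proof is just ``Application of Definition \ref{sdl}''. You fill in the details the paper leaves out: the diagonal pairs $(A,A)$ give $\mathrm{sd}(L)\ge 1/|\mathcal{L}(L)|>0$, and you check directly, for subalgebras, Amayo's equivalence $[A,B]\subseteq A+B \iff \langle A,B\rangle=A+B$, on which the ``iff'' part depends.
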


 \begin{proof}
 Application of Definition \ref{sdl}.    
 \end{proof}

\begin{remark} $\mathbb{A}_1$ is modular, but has no quasi-ideals. However Amayo and Schwarz \cite{amsc} showed that a modular subalgebra $M$ of a Lie algebra $L$ permutable with a solvable subalgebra $S$ of $L$ is a quasi-ideal of $M + S$,  in particular $M$ is a quasi-ideal of $L$ if $L$ is solvable. \end{remark}

 \begin{example} \label{exam1} The Lie algebra
\(L=\langle x,y\ |\ [x,y]=x\rangle \) over the field \(\mathbb{F}_{p^n}\) is a non-nilpotent solvable modular Lie algebra.  In particular, consider this $L$ on \(\mathbb{F}_{2}\). The non-zero elements are $x,y$ and $x+y$ and this implies $\mathcal{L}(L)=\{ \{0\}, L, \langle x \rangle, \langle y \rangle, \langle x+y \rangle\}.$ Since  \([x,x]=0 \in \langle x\rangle \), $[y,x]=-[x,y]=-x=x $ belongs to $ \langle x\rangle$, hence \(\langle x\rangle \) is a one dimensional ideal.  But \([x,y]\notin \langle y\rangle \) and \([x,x+y]=x \notin \langle x+y\rangle \), and so both \(\langle y\rangle \) and \(\langle x+y\rangle \) are not ideals. However, \(\langle y\rangle \) permute with \(\langle x+y\rangle \), since $y+(x+y)=x+2y=x+0=x$  over $\mathbb{F}_{2}$ and 
\([\langle y\rangle,\langle x+y\rangle]\subseteq \langle y\rangle+\langle x+y\rangle = \{0,y,x,y+x\}=L\). Then $\langle y\rangle$ and $\langle x+y\rangle$ are quasi-ideals of $L$. Hence   $\mathrm{sd}(L)=1$, implying that $L$ is  non-nilpotent solvable modular. In addition  $\mathcal{L}(L)\simeq M_3$ is a diamond (i.e.: a lattice in which atoms and coatoms coincide, see \cite{rs}) and this is a minimal example of  non-distributive modular lattice. On the other hand,  the one dimensional subalgebras  of   $\mathfrak{sl}_2(\mathbb{F}_p)$ are atomic for any prime $p$, but are not of codimension one and so $\mathrm{sd}(\mathfrak{sl}_2(\mathbb{F}_p))<1$.  In fact, these Lie algebras are relatively far from being modular Lie algebras, see \cite{barnes1967, boto, elduque1}.  It can be checked also that   $\mathfrak{sl}_2(\mathbb{F}_p)$ has a subalgebra isomorphic to $N_5$, moreover Lemma \ref{sdandalmost} later on shows independently that the subalgebra commutativity degree of $L$ is one whenever $L$ is abelian or almost abelian.  \end{example}

It is more convenient to look at Definition \ref{sdl} from a more combinatorial perspective, in analogy to what has been done in \cite{sr1, sr2, or, mt}. As usual we consider $L$ finite dimensional Lie algebra on $\mathbb{F}_{p^n}$, but now let $\chi:\  \mathcal{L}(L)\times \mathcal{L}(L)\longrightarrow \{0,1\}$ be a function defined by 
\begin{equation} \label{charfun}
    \chi(A,B)=\left\{\begin{array}{lcl} 1,&\,\,& \mathrm{if} \   \ [A,B]\subseteq A+B , \\  
\\ 
 0 ,&\,\,& \mathrm{otherwise} . \\
 \end{array}\right.
\end{equation}
For an arbitrary subalgebra  $A$ of $L$, the set 
\begin{equation} \label{permutewithA}
        \mathcal{Q}_{\mathcal{L}(L)}(A) =\{B \in \mathcal{L}  (L)  \mid  \ [A, \  B] \subseteq A+B \}. 
\end{equation}
 containing the subalgebras of $L$ that permute with $A$ is the \textit{permutizer} of $A$ in $\mathcal{L}(L)$. Note that the set \begin{equation}\label{intersection}
\mathcal{Q}_{\mathcal{L}(L)}(\mathcal{L}(L)) =\{B \in \mathcal{L}(L) \ | \ [A,B] \subseteq A+B, \  \ \ \forall A \in \mathcal{L}(L)\} 
\end{equation}
 is precisely the family of quasi-ideals, testing against all subalgebras gives
the bracket condition against every vector subspace.




\begin{lemma}\label{combinatorialformulat} Given $L$ finite dimensional Lie algebra on $\mathbb{F}_{p^n}$, we have
\[\label{sdviacharfun}
    \mathrm{sd}(L)=\frac{1}{|\mathcal{L}(L)|^2} \underset{A, B\in \mathcal{L}(L) }{\sum} \chi (A,B).
\]    
\end{lemma}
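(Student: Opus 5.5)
The identity is a direct counting argument: the sum of the indicator $\chi$ over $\mathcal{L}(L)\times\mathcal{L}(L)$ equals the cardinality of the set in Definition \ref{sdl}. I would first check that both sides are well defined. Since $L$ is finite dimensional over the finite field $\mathbb{F}_{p^n}$, $L$ is a finite set of cardinality $p^{n\dim L}$, so it has only finitely many subsets and hence finitely many subalgebras. Also $\mathcal{L}(L)$ contains at least $\{0\}$ and $L$, so $|\mathcal{L}(L)|\geq 1$ and dividing by $|\mathcal{L}(L)|^2$ is legitimate. Thus the sum on the right is a finite sum of values in $\{0,1\}$.

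Next, set
\[
\mathcal{P}=\{(A,B)\in \mathcal{L}(L)\times\mathcal{L}(L)\mid [A,B]\subseteq A+B\}.
\]
By the definition (\ref{charfun}), $\chi$ is exactly the characteristic function of $\mathcal{P}$ inside $\mathcal{L}(L)\times\mathcal{L}(L)$. Splitting the finite sum according to whether a pair lies in $\mathcal{P}$ or not gives
\[
\sum_{A,B\in\mathcal{L}(L)}\chi(A,B)=\sum_{(A,B)\in\mathcal{P}}1+\sum_{(A,B)\notin\mathcal{P}}0=|\mathcal{P}|.
\]
Dividing by $|\mathcal{L}(L)|^2$ and comparing with Definition \ref{sdl} yields the claimed formula.

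I would also record the equivalent form
\[
\mathrm{sd}(L)=\frac{1}{|\mathcal{L}(L)|^2}\sum_{A\in\mathcal{L}(L)}|\mathcal{Q}_{\mathcal{L}(L)}(A)|,
\]
obtained by summing first over $B$ with $A$ fixed and using (\ref{permutewithA}). This form is the one used later in computations such as those for $\mathfrak{h}(1)$.

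There is no real obstacle here. The only point needing care is that $\chi$ must be defined with exactly the same permutability condition $[A,B]\subseteq A+B$ as in Definition \ref{sdl}, and with $[A,B]$ understood as in the definition. Since $\chi$ is given by that same condition, this is immediate.
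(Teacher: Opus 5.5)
Your proposal is correct and takes essentially the same route as the paper: the paper also just unwinds Definition \ref{sdl}, phrasing the count through the row sums $|\mathcal{Q}_{\mathcal{L}(L)}(A)|=\sum_{B\in\mathcal{L}(L)}\chi(A,B)$, which is exactly the equivalent form you record. Your extra checks that $\mathcal{L}(L)$ is finite and that $|\mathcal{L}(L)|\geq 1$ add care that the paper leaves implicit.
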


\begin{proof} It is sufficient to apply definitions and observe that for   $A\in \mathcal{L}(L)$ we have
\[  
|\mathcal{Q}_{\mathcal{L}(L)}(A)|=\underset{B\in \mathcal{L}(L) }{\sum} \chi(A,B).
\]
\end{proof}
In general, if $I \in \mathcal{I}(L)$, then  $[I, H]\subseteq I \subseteq I+H$ for every  $H \in \mathcal{L}(L)$ and so $H$ is a quasi-ideal in $L$.  However, not every maximal subalgebra of $L$ is necessarily a quasi-ideal in $L$.

\begin{lemma} \label{codimone1}
Let $L$ be a  finite dimensional Lie algebra on $\mathbb{F}_p$ such that every maximal subalgebras of $L$ are of codimension one and atomic. Then 
$\mathrm{sd}(L)=1$. 
\end{lemma}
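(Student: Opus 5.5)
The plan is to show that every pair of subalgebras $A,B$ of $L$ satisfies $[A,B]\subseteq A+B$. By Lemma \ref{combinatorialformulat} this means $\chi\equiv 1$ on $\mathcal{L}(L)\times\mathcal{L}(L)$, and hence $\mathrm{sd}(L)=1$. I read the hypothesis ``codimension one and atomic'' to mean that each maximal subalgebra $M$ has $\dim M=\dim L-1$ and is also an atom of $\mathcal{L}(L)$, i.e. $\dim M=1$. Under this reading $\dim L\le 2$, and this dimension bound drives the whole argument.

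First I dispose of the trivial pairs. If $A=0$, $B=0$, $A=L$, $B=L$, or $A\subseteq B$ (or $B\subseteq A$), then $A+B$ equals $A\cup B$, which is one of $A$, $B$ or $L$. It is therefore a subalgebra, so $[A,B]\subseteq A+B$ holds.

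Next assume $A$ and $B$ are incomparable, nonzero and proper. Every proper nonzero subalgebra lies in some maximal subalgebra, and maximal subalgebras are one dimensional, so every proper nonzero subalgebra is itself one dimensional, hence an atom. Thus $A=\langle a\rangle$ and $B=\langle b\rangle$ with $a,b$ linearly independent. Since $\dim L\le 2$, we get $A+B=L$, and so $[A,B]\subseteq L=A+B$ trivially.

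The only step needing care is the dimension count. A maximal subalgebra that has codimension one and is also an atom forces $\dim L=2$ (or $\dim L=1$, where $L$ has only the subalgebras $0$ and $L$ and everything is immediate). If instead ``atomic'' is meant more weakly, for instance that the lattice of subalgebras below each maximal subalgebra is generated by atoms, I would fall back on a different argument. It starts from the fact that maximal subalgebras of codimension one make $L$ supersolvable, by the Barnes characterization recalled in Section \ref{sec2}. From there it would try to show that any two incomparable subalgebras generate $L$ and span a codimension at most one subspace containing a maximal subalgebra. I expect that route to be the real obstacle. It can fail, as $\mathfrak{h}(1)$ shows, so I would commit to the literal reading above.
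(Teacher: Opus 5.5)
Your proof is correct under the literal reading you adopt, but it follows a different route from the paper's.

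\textbf{The paper's route.} The paper argues that a maximal subalgebra $M$ of codimension one is a quasi-ideal: for any subspace $R$, either $R\subseteq M$ or $M+R=L$. It then immediately concludes $\mathrm{sd}(L)=1$, without invoking atomicity. That conclusion silently needs the step your dimension count makes explicit. Because every maximal subalgebra is an atom, every nonzero proper subalgebra is itself maximal, hence a quasi-ideal; only then do all pairs permute. Without this step the paper's argument would apply verbatim to $\mathfrak{h}(1)$. There all maximal subalgebras have codimension one and are quasi-ideals, yet $\langle x\rangle$ and $\langle y\rangle$ do not permute, since $[x,y]=z\notin\langle x\rangle+\langle y\rangle$. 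This is exactly the example you flag against the weaker reading.

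\textbf{Your route.} You replace the quasi-ideal notion by the observation that the hypothesis forces $\dim L\le 2$. After that, two incomparable subalgebras are distinct lines whose sum is $L$. This is more elementary. It also shows that the lemma is really a statement about two-dimensional algebras, where every subspace is already a subalgebra.

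\textbf{Comparison.} The paper's observation is the more structural fact: a codimension-one subalgebra permutes with every subspace, not just with every subalgebra. On its own, however, it does not yield $\mathrm{sd}(L)=1$. Your argument is complete as written.
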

\begin{proof}
In the present situation,  if  $M \in \mathcal{L}(L)$ is of codimension one, then $L=M+\lambda x$ for $x\in L \backslash M $ and $\lambda \in \mathbb{F}_p$, and $[M, \langle x\rangle] \subseteq M+ \langle x\rangle$. This implies $M$ is a quasi-ideal in $L$, so  $\mathrm{sd}(L)=1$.
\end{proof}

\begin{lemma} \label{sdandalmost}
Let $L$ be an almost abelian Lie algebra of  dimension  over $\mathbb{F}_p$. Then $\mathrm{sd}(L)=1$. \end{lemma}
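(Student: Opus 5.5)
We must show every pair of subalgebras $A,B$ of an almost abelian $L$ permutes, i.e. $[A,B]\subseteq A+B$. By Definition \ref{almostabelian}, $L=V+\mathbb{F}_p e_0$ with $V=\langle e_1,\dots,e_n\rangle$ an abelian ideal and $[v,e_0]=v$ for all $v\in V$. The key structural fact is that every subspace of $V$ is an ideal of $L$: if $W\subseteq V$ is a subspace, then $[W,V]=0$ and $[W,e_0]=W$, so $[W,L]\subseteq W$.

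Next we describe an arbitrary subalgebra $A$. Either $A\subseteq V$, in which case $A$ is an ideal by the previous paragraph, or $A$ contains an element $a=\lambda e_0+v$ with $\lambda\neq 0$. In the second case $A\cap V$ has codimension one in $A$. For any $w\in V$ we compute $[w,a]=\lambda w$. Hence for any subspace $W\subseteq V$ we get $[W,a]=W$, since $\lambda$ is invertible.

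Now take two subalgebras $A,B$. If either of them lies in $V$, it is an ideal, and then $[A,B]\subseteq A+B$ trivially. Otherwise write $A=(A\cap V)+\mathbb{F}_p a$ and $B=(B\cap V)+\mathbb{F}_p b$, with $a=\lambda e_0+v$, $b=\mu e_0+u$ and $\lambda,\mu\neq0$. We check the brackets term by term.
\begin{itemize}
\item $[A\cap V,B\cap V]=0$.
\item $[A\cap V,b]=A\cap V\subseteq A$, and symmetrically $[a,B\cap V]\subseteq B$.
\item For the remaining term, bilinearity and $[e_0,e_0]=0$ give
\[
[a,b]=[\lambda e_0,u]+[v,\mu e_0]+[v,u]=-\lambda u+\mu v .
\]
\end{itemize}
We must show $-\lambda u+\mu v\in A+B$. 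Observe that $\mu a-\lambda b=\mu v-\lambda u$. This element lies in $A+B$, and it equals $[a,b]$.

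Summing these contributions gives $[A,B]\subseteq A+B$, so $\chi(A,B)=1$ for all pairs, and Lemma \ref{combinatorialformulat} yields $\mathrm{sd}(L)=1$. The only step needing care is the last bracket $[a,b]$, and the identity $[a,b]=\mu a-\lambda b$ resolves it. The dimension hypothesis plays no role, and neither does $p$: the same argument works over any $\mathbb{F}_{p^n}$.
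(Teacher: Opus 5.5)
Your proof is correct and is essentially the paper's argument. The paper also rests on the identity $[a,b]=b_0a-a_0b$ (your $[a,b]=\mu a-\lambda b$), applied directly to arbitrary $a\in A$, $b\in B$ with $e_0$-coefficients $a_0,b_0$; since this identity still holds when $a_0$ or $b_0$ is zero, your case split according to whether $A$ or $B$ lies in $V$ is harmless but unnecessary.
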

\begin{proof}From Definition \ref{almostabelian}, since $L$ has a basis $\{e_0, e_1, \cdots, e_n\}$ with $[e_i,e_0]=e_i$ for $i\geq 1$ and $[e_i,e_j]=0$ for $i, j\geq 1$, for arbitrary subspaces $A$ and $B$ of $L$ with $a=a_0e_0+a_1e_1+\cdots+a_1e_1\in A$ and $b=b_0e_0+b_1e_1+\cdots+b_1e_1\in B$ we have that $[a,b]=b_0a-a_0b\in A$, and $[a,b]=b_0a-a_0b\in B$. Therefore $[a,b]=b_0a-a_0b\in A+B$. This means $[A,B] \subseteq A+B$, or equivalently $\mathcal{L}(L)= \mathcal{Q}_{\mathcal{L}(L)}(\mathcal{L}(L))$. Hence the result follows. \end{proof}

We may adapt some arguments from \cite{or, mt}  via Lemmas \ref{combinatorialformulat}, \ref{codimone1} and \ref{sdandalmost}.

\begin{proposition} \label{boundintermsofideal}
Let $I$ be an arbitrary ideal of a finite dimensional Lie algebra ${L}$ on $\mathbb{F}_p$. Then
 \[
|\mathcal{L}(L)|^2 \ \     \operatorname{sd}(L) \geqslant \left(|\mathcal{L}(I)|+\left|\mathcal{L}\left(\frac{L}{I}\right)\right|-1\right)^2
+\Big(\operatorname{sd}(I)-1\Big)|\mathcal{L}(I)|^2+\left(\operatorname{sd}\left(\frac{L}{I}\right)-1\right)\left|\mathcal{L}\left(\frac{L}{I}\right)\right|^2.\]
\end{proposition}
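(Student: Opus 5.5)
We adapt the subgroup commutativity degree argument for a normal subgroup (as in \cite{or, mt}). By Lemma \ref{combinatorialformulat}, $|\mathcal{L}(L)|^2\,\mathrm{sd}(L)$ equals the number of permutable pairs in $\mathcal{L}(L)\times\mathcal{L}(L)$. So it suffices to exhibit enough such pairs. We use two families of subalgebras of $L$. The first is $\mathcal{L}(I)$. The second is the set $\mathcal{L}_I(L)$ of subalgebras containing $I$, which is in bijection with $\mathcal{L}(L/I)$ via $H\mapsto H/I$ (correspondence theorem). The two families share exactly one element, $I$ itself. Hence
\[
|\mathcal{L}(I)\cup \mathcal{L}_I(L)| = |\mathcal{L}(I)|+|\mathcal{L}(L/I)|-1 .
\]

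We then count permutable pairs inside $\mathcal{S}=\mathcal{L}(I)\cup\mathcal{L}_I(L)$ in three steps.

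\textbf{Mixed pairs.} Take $A\in\mathcal{L}(I)$ and $H\in\mathcal{L}_I(L)$. Then $A\subseteq I\subseteq H$, so $A+H=H$ and $[A,H]\subseteq H=A+H$. Every such pair is therefore permutable, and the same holds with the roles reversed.

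\textbf{Pairs inside $I$.} For $A,B\in\mathcal{L}(I)$, the bracket $[A,B]$ and the sum $A+B$ computed in $L$ coincide with those computed in $I$. Hence $\chi_L(A,B)=\chi_I(A,B)$. The number of permutable pairs here is $\mathrm{sd}(I)|\mathcal{L}(I)|^2$.

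\textbf{Pairs containing $I$.} For $H,K\in\mathcal{L}_I(L)$, we have $[H,K]\subseteq H+K$ if and only if $[H/I,K/I]\subseteq H/I+K/I$. The forward direction is immediate on passing to the quotient. For the converse, since $I\subseteq H+K$, any element of $[H,K]$ whose image lies in $(H+K)/I$ lies in $H+K$. The number of permutable pairs here is therefore $\mathrm{sd}(L/I)|\mathcal{L}(L/I)|^2$.

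Now assemble the count. The total number of ordered pairs in $\mathcal{S}\times\mathcal{S}$ is $(|\mathcal{L}(I)|+|\mathcal{L}(L/I)|-1)^2$. By the mixed-pair step, the only pairs that can fail to permute lie in $\mathcal{L}(I)^2$ or in $\mathcal{L}_I(L)^2$. The non-permutable pairs in these two blocks number $(1-\mathrm{sd}(I))|\mathcal{L}(I)|^2$ and $(1-\mathrm{sd}(L/I))|\mathcal{L}(L/I)|^2$ respectively. The two blocks overlap only in the pair $(I,I)$, which is permutable, so nothing is subtracted twice. Subtracting from the total gives exactly the right-hand side of the claimed inequality as a lower bound for the number of permutable pairs in $\mathcal{S}^2$. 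This is in turn at most the number in $\mathcal{L}(L)^2$, which equals $|\mathcal{L}(L)|^2\,\mathrm{sd}(L)$.

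The only points needing care are the overlap bookkeeping (the shared element $I$) and the converse direction of the quotient equivalence. The latter is the main check: one must use $I\subseteq H+K$ so that lifting back from $L/I$ loses nothing. The rest is routine.
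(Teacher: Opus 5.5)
Your proof is correct and follows essentially the paper's argument. The paper works with the same set, split disjointly into the subalgebras $\mathcal{D}_1$ containing $I$ and the proper subalgebras $\mathcal{D}_2$ of $I$. It identifies the diagonal blocks with $\mathrm{sd}(L/I)|\mathcal{L}(L/I)|^2$ and with $\mathrm{sd}(I)|\mathcal{L}(I)|^2$ (corrected for the element $I$), and uses that every mixed pair permutes.

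The only difference is bookkeeping. You subtract the non-permutable pairs from $(|\mathcal{L}(I)|+|\mathcal{L}(L/I)|-1)^2$, whereas the paper adds block sums. You also justify the quotient correspondence and the mixed-pair step, which the paper merely asserts.
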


\begin{proof}
  Consider the sets $\mathcal{D}_1=\{A\in \mathcal{L}(L)| I\subseteq A \}$  and  $\mathcal{D}_2=\{A\in \mathcal{L}(L)| A \subset I \}$.  Since $\mathcal{D}_1 \cup \mathcal{D}_2 \subseteq \mathcal{L}(L) $, then by \eqref{charfun} we have that 
  $$\mathrm{sd}(L)\geq \frac{1}{|\mathcal{L}(L)|^2}\underset{A,B\in \mathcal{D}_1 \cup \mathcal{D}_2}{\sum}{\chi(A,B)}=\frac{1}{|\mathcal{L}(L)|^2} \Bigg(\underset{A,B\in \mathcal{D}_1 }{\sum}{\chi(A,B)}+
  \underset{A,B\in  \mathcal{D}_2}{\sum}{\chi(A,B)}+
  2\underset{A\in \mathcal{D}_1 }  {\sum}  \underset{B\in \mathcal{D}_2 }  {\sum}{\chi(A,B)}
  \Bigg).$$
  Now, 
  $$\underset{A,B\in \mathcal{D}_1 }{\sum}{\chi(A,B)}= \operatorname{sd}\left(\frac{L}{I}\right)\left|\mathcal{L}\left(\frac{L}{I}\right)\right|^2,$$
  $$ \underset{A,B\in \mathcal{D}_2 }{\sum}{\chi(A,B)}= \underset{A,B\in \mathcal{D}_2 \cup \{I\}}{\sum}{\chi(A,B)} -2\underset{A\in \mathcal{D}_2 \cup \{I\} }{\sum}{\chi(A,B)}+1 \ \ \mbox{and} $$
  $$2\underset{A\in \mathcal{D}_1 }  {\sum}  \underset{B\in \mathcal{D}_2 }  {\sum}\chi(A,B)= 2 \ |\mathcal{D}_1| \ |\mathcal{D}_2| = 2 \ |\mathcal{L}(L/I)| \ \Big( |\mathcal{L}(I)|-1\Big).$$ 
  Therefore, the result follows by summing up all these inequalities. 
\end{proof}

\begin{corollary}
    Let $L$ be a finite dimensional Metabelian Lie algebra over $\mathbb{F}_p$ with both ideal $I$ and $L/I$ are abelian. Then 
    $$\mathrm{sd}(L)\geq \Bigg( \frac{|\mathcal{L}(I)|+ |\mathcal{L}(L/I)|-1}{|\mathcal{L}(L)|}\Bigg)^2.$$
\end{corollary}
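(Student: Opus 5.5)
The plan is to specialize Proposition \ref{boundintermsofideal} to the given ideal $I$, so that the two correction terms vanish, and then divide by $|\mathcal{L}(L)|^2$. The hypotheses on $I$ and $L/I$ are exactly what is needed to make $\operatorname{sd}(I)=\operatorname{sd}(L/I)=1$.

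The first step is to show that every abelian finite dimensional Lie algebra $M$ over $\mathbb{F}_p$ satisfies $\operatorname{sd}(M)=1$. For any subalgebras $A,B$ of $M$ we have $[A,B]=0$, and $0\subseteq A+B$. Hence $\chi(A,B)=1$ for every pair in \eqref{charfun}. Lemma \ref{combinatorialformulat} then gives $\operatorname{sd}(M)=|\mathcal{L}(M)|^2/|\mathcal{L}(M)|^2=1$. This is also the case $\mathcal{L}(M)=\mathcal{I}(M)$ of the earlier proposition giving $0<\operatorname{sd}(L)\le 1$, since every subspace of an abelian algebra is an ideal. Applying this to $M=I$ and to $M=L/I$ yields $\operatorname{sd}(I)=1$ and $\operatorname{sd}(L/I)=1$.

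The second step is to substitute these values into Proposition \ref{boundintermsofideal}. The terms $(\operatorname{sd}(I)-1)|\mathcal{L}(I)|^2$ and $(\operatorname{sd}(L/I)-1)|\mathcal{L}(L/I)|^2$ are both zero, leaving
\[
|\mathcal{L}(L)|^2\,\operatorname{sd}(L)\ \geqslant\ \left(|\mathcal{L}(I)|+\left|\mathcal{L}\left(\frac{L}{I}\right)\right|-1\right)^2 .
\]
Since $L$ is finite dimensional over the finite field $\mathbb{F}_p$, the lattice $\mathcal{L}(L)$ is finite and nonempty, because it contains $0$ and $L$. Dividing by $|\mathcal{L}(L)|^2>0$ and writing the right-hand side as a single square gives the stated inequality.

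There is no serious obstacle in this argument, since all the combinatorial work is carried by Proposition \ref{boundintermsofideal}, which we are allowed to assume. The only point to state explicitly is why the quotient $L/I$ may be fed into that proposition. The proposition identifies subalgebras of $L$ containing $I$ with subalgebras of $L/I$, via the correspondence theorem. I would also remark that metabelianity of $L$ is used only through the two abelian hypotheses on $I$ and $L/I$.
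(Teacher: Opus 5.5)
Your proposal is correct and is essentially the paper's own argument: the abelian hypotheses give $\operatorname{sd}(I)=\operatorname{sd}(L/I)=1$, so both correction terms in Proposition \ref{boundintermsofideal} vanish, and dividing by $|\mathcal{L}(L)|^2$ gives the stated bound. The only difference is that you justify $\operatorname{sd}=1$ for abelian algebras explicitly (since $[A,B]=0\subseteq A+B$), whereas the paper simply asserts it.
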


\begin{proof}
Assume $L$ be a finite dimensional metabelian lie algebra on $\mathbb{F}_p$. It therefore possesses an ideal $I$ of $L$, and let $L/I$ and $I$ are both abelian. This implies $\mathrm{sd}(I)=\mathrm{sd}(L/I)=1$ and the result follows from Proposition \ref{boundintermsofideal}.
\end{proof}

\begin{corollary} \label{codimone}
    Let $L$ be a finite dimensional Lie algebra on $\mathbb{F}_p$ having an ideal $I$ of codimension one. Then 
    $$\mathrm{sd}(L)\geq \frac{1}{|\mathcal{L}(L)|^2}\Big(\mathrm{sd}(I) |\mathcal{L}(I)|^2+2|\mathcal{L}(I)|+1 \Big).$$  
\end{corollary}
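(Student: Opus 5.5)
The plan is to specialize Proposition \ref{boundintermsofideal} to the case where $I$ has codimension one. The key point is that the quotient $L/I$ is then a one dimensional Lie algebra, which is abelian. Its subalgebra lattice is therefore $\mathcal{L}(L/I)=\{0,L/I\}$, so $|\mathcal{L}(L/I)|=2$. Moreover every pair of its subalgebras is permutable, so $\mathrm{sd}(L/I)=1$.

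Substituting these values, the right-hand side of Proposition \ref{boundintermsofideal} becomes
\[
\big(|\mathcal{L}(I)|+1\big)^2+\big(\mathrm{sd}(I)-1\big)|\mathcal{L}(I)|^2+0 .
\]
Expanding the square gives
\[
|\mathcal{L}(I)|^2+2|\mathcal{L}(I)|+1+\mathrm{sd}(I)|\mathcal{L}(I)|^2-|\mathcal{L}(I)|^2=\mathrm{sd}(I)|\mathcal{L}(I)|^2+2|\mathcal{L}(I)|+1 .
\]
Dividing by $|\mathcal{L}(L)|^2$ yields the claimed inequality.

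Since the derivation of Proposition \ref{boundintermsofideal} is somewhat terse, I would also verify the bound directly, by counting permuting pairs inside $\mathcal{D}=\mathcal{L}(I)\cup\{L\}$.

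\begin{itemize}
\item Pairs with both entries in $\mathcal{L}(I)$ contribute exactly $\mathrm{sd}(I)|\mathcal{L}(I)|^2$. This is because for $A,B\subseteq I$ the condition $[A,B]\subseteq A+B$ is intrinsic to $I$.
\item The pairs $(A,L)$ and $(L,A)$ with $A\in\mathcal{L}(I)$ are always permutable, since $A+L=L$. They contribute $2|\mathcal{L}(I)|$.
\item The pair $(L,L)$ contributes $1$.
\end{itemize}

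The three contributions come from disjoint sets of pairs. Since $\chi\ge 0$, Lemma \ref{combinatorialformulat} then gives the lower bound directly.

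The only real obstacle is bookkeeping. One must make sure that $I$ itself is counted once, inside $\mathcal{L}(I)$, and that $L\neq I$, which holds because the codimension is one. This ensures the three families of pairs do not overlap and the count is exact.
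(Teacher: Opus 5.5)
Your proposal is correct and is exactly the paper's argument: substitute $|\mathcal{L}(L/I)|=2$ and $\mathrm{sd}(L/I)=1$ into Proposition \ref{boundintermsofideal} and simplify. Your supplementary direct count of permuting pairs in $\mathcal{L}(I)\cup\{L\}$ is also sound, and it makes the corollary independent of the rather terse proof of that proposition.
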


\begin{proof}
Assume $I$  is of codimension one. Then $|\mathcal{L}(L/I)|=2$ and  $\mathrm{sd}(L/I) =1$.  Now apply Proposition \ref{boundintermsofideal}.
\end{proof}

\begin{corollary} Let $L$ be a solvable finite dimensional Lie algebra on $\mathbb{F}_p$ and let $\{0\}=L_0 \subset L_1 \subset \cdots \subset L_k=L$ be a series of subalgebras of $L$ such that all factor Lie algebras $L_i/L_{i-1}$ are one dimensional. Then 
    $$\operatorname{sd}(L) \geqslant \frac{1}{|\mathcal{L}(L)|^2}\left(2 \sum_{i=1}^k\left|\mathcal{L}\left(L_{i-1}\right)\right|+(k+1)\right).
$$
\end{corollary}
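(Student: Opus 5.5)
Induct along the series, applying Corollary~\ref{codimone} at each step. Although the statement says ``series of subalgebras'', I will read it as a flag of ideals $L_{i-1}\trianglelefteq L_i$ with one dimensional quotients; such a flag exists because $L$ is solvable. With this reading, each $L_{i-1}$ is an ideal of codimension one in $L_i$, and Corollary~\ref{codimone} applied to the pair $(L_i, L_{i-1})$ gives
\[
\mathrm{sd}(L_i)\,|\mathcal{L}(L_i)|^2 \;\geq\; \mathrm{sd}(L_{i-1})\,|\mathcal{L}(L_{i-1})|^2 + 2|\mathcal{L}(L_{i-1})| + 1 .
\]
Set $S_i=\mathrm{sd}(L_i)|\mathcal{L}(L_i)|^2$. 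This is simply the number of permuting pairs in $\mathcal{L}(L_i)\times\mathcal{L}(L_i)$, and the inequality reads $S_i\geq S_{i-1}+2|\mathcal{L}(L_{i-1})|+1$.

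Telescoping from $i=1$ to $k$ gives
\[
S_k \geq S_0 + 2\sum_{i=1}^k |\mathcal{L}(L_{i-1})| + k .
\]
For the base, $L_0=\{0\}$ has $\mathcal{L}(L_0)=\{0\}$, so $S_0=1$. Hence $S_k\geq 2\sum_{i=1}^k|\mathcal{L}(L_{i-1})|+(k+1)$. Dividing by $|\mathcal{L}(L)|^2$ gives the claim.

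The main obstacle is the one-step inequality itself, since the proof of Proposition~\ref{boundintermsofideal} is sketchy. I would re-derive it directly for $I$ of codimension one in $M$, where the only subalgebras containing $I$ are $I$ and $M$. The pairs inside $\mathcal{L}(I)$ contribute $S(I)$. The pairs involving $M$ (namely $(M,B)$ and $(B,M)$ for $B\in\mathcal{L}(I)$, together with $(M,M)$) all permute trivially, because $[M,B]\subseteq M=M+B$. These contribute $2|\mathcal{L}(I)|+1$ and are disjoint from the pairs already counted. This yields the needed bound without relying on the possibly flawed middle identities of Proposition~\ref{boundintermsofideal}.

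If only a chain of subalgebras were given, rather than ideals, this argument would still go through: it never uses that $L_{i-1}$ is an ideal. It only uses that $M$ is a subalgebra containing $B$, so that $M+B=M$ is closed.
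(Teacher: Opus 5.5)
Your proof is correct and follows the paper's route: apply Corollary~\ref{codimone} to each step $L_{i-1}\subset L_i$, telescope, and let the base term $\mathrm{sd}(L_0)|\mathcal{L}(L_0)|^2=1$ supply the extra $1$ in $(k+1)$ (the ``$+k$'' in the paper's intermediate display is a typo for $+1$, which your version gets right). Your direct count of the pairs $(M,B)$, $(B,M)$, $(M,M)$ is a valid self-contained replacement for the one-step bound, and it shows that this bound holds for any proper subalgebra $I\subsetneq M$, with neither the ideal property nor codimension one needed.
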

 \begin{proof}
From Corollary \ref{codimone}  we obtain
  $$\operatorname{sd}\left(L_i\right)\left|\mathcal{L}\left(L_i\right)\right|^2 \geqslant \operatorname{sd}\left(L_{i-1}\right)\left|\mathcal{L}\left(L_{i-1}\right)\right|^2+2\left|\mathcal{L}\left(L_{i-1}\right)\right|+1 \ \mbox{for} \ i=1,\cdots,k. $$ 
\[ \label{solvablebound} \Longrightarrow \ \operatorname{sd}\left(L_i\right)\left|\mathcal{L}\left(L_i\right)\right|^2 -
 \operatorname{sd}\left(L_{i-1}\right)\left|\mathcal{L}\left(L_{i-1}\right)\right|^2
 \geqslant
 2\left|\mathcal{L}\left(L_{i-1}\right)\right|+k \ \mbox{for} \ i=1,\cdots,k
\] and the result follows. \end{proof}

An idea of Farrokhi and others \cite{farrokhi1, farrokhi2}  can be adapted to the present situation. For a finite dimensional Lie algebra $L$ on $\mathbb{F}_{p^n}$ we define the \textit{factorization number} of $L$ by 
$$F_2(L)=|\{(A,B)\in \mathcal{L}(L) \times \mathcal{L}(L) \ | \ L= A + B \}|.$$ 
This denotes the number of all possible factorizations of $L$ as sum of two subalgebras $A$ and $B$. The subalgebra commutativity degree and factorization numbers are strongly connected.

\begin{proposition} \label{sdandf2} If $F_2(A)$ is the factorization number of an arbitrary subalgebra $A$ of a finite dimensional Lie algebra $L$ on $\mathbb{F}_{p^n}$, then
\[     \mathrm{sd}(L)=\frac{1}{|\mathcal{L}(L)|^2}\underset{A\in \mathcal{L}(L)}{\sum} F_2(A). \]
\end{proposition}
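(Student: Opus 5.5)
The plan is a double-counting argument that regroups the pairs counted in Lemma \ref{combinatorialformulat} according to the subalgebra they generate. The starting point is Amayo's criterion recalled in the introduction: two subalgebras $A,B$ satisfy $[A,B]\subseteq A+B$ if and only if the vector space sum $A+B$ is itself a subalgebra. In that case $A+B=A\cup B=\langle A,B\rangle$, because $A+B$ is contained in every subalgebra containing $A$ and $B$. So a permutable pair $(A,B)$ determines a unique subalgebra $C=A+B$ of $L$, namely the one it factorizes.

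First I will write the numerator of Definition \ref{sdl} as
\[
\sum_{A,B\in\mathcal{L}(L)}\chi(A,B)=\left|\{(A,B)\in\mathcal{L}(L)\times\mathcal{L}(L)\mid A+B\in\mathcal{L}(L)\}\right| .
\]
Next I will partition this set according to the value $C=A+B\in\mathcal{L}(L)$. For a fixed $C$, the pairs in its block are exactly the pairs $(A,B)$ of subalgebras of $L$ with $A+B=C$. Both $A$ and $B$ then lie inside $C$, and a subspace of $C$ is a subalgebra of $C$ if and only if it is a subalgebra of $L$. Hence these pairs are precisely the elements of $\mathcal{L}(C)\times\mathcal{L}(C)$ with $C=A+B$, so the block has $F_2(C)$ elements.

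Conversely, every pair counted by $F_2(C)$ is permutable in $L$. Its sum is the subalgebra $C$, which gives $[A,B]\subseteq C=A+B$. The map $(A,B)\mapsto(A+B,(A,B))$ is therefore a bijection between permutable pairs and the disjoint union $\bigsqcup_{C}\{\text{factorizations of } C\}$. Summing over $C$ and dividing by $|\mathcal{L}(L)|^2$ gives the stated formula, after renaming the summation variable $C$ as $A$.

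The only step needing care is the equivalence between permutability and $A+B$ being closed under the bracket, including the degenerate cases $A=0$, $B=0$ and $A\subseteq B$. These cases are consistent: $F_2(C)$ counts $(C,0)$, $(0,C)$ and $(C,C)$, which correspond to trivially permutable pairs. I do not expect any genuine obstacle beyond stating this bijection cleanly.
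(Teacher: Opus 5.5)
Your proof is correct and takes essentially the paper's route. Both arguments double-count the set of permutable pairs, using that $[A,B]\subseteq A+B$ exactly when $A+B$ is a subalgebra, so each such pair is a factorization of the subalgebra $A+B$. Your explicit partition by $C=A+B$ also supplies the justification for the paper's one-line identity $\sum_A F_2(A)=\sum_A|\mathcal{Q}_{\mathcal{L}(L)}(A)|$, which holds only after summing and not term by term.
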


\begin{proof}
Since $F_2(A)=|{(B,D) \in \mathcal{L}(A)\times \mathcal{L}(A)}\  | \ [B,D] \subseteq B+D \ \mbox{and} \  B+D=A\}|$  and  $ |\mathcal{Q}_{\mathcal{L}(L)}(A)|=|\{B \in \mathcal{L}(L)  \mid A\cup B =A+B\ \mbox{and} \ [A, \  B] \subseteq A+B \}|$, we may conclude that
$$\underset{A\in \mathcal{L}(L)}{\sum} F_2(A)= \underset{A\in \mathcal{L}(L)}{\sum} |\mathcal{Q}_{\mathcal{L}(L)}(A)|.$$ Hence the result follows from Lemma \ref{combinatorialformulat}.
\end{proof}

Using the \textit{Möbius Inversion Formula}  for in Proposition \ref{sdandf2} we have 
\begin{equation} \label{sdandf2viamobius}
    F_2(L)= \underset{A\in \mathcal{L}(L)}{\sum} \mathrm{sd}(A) |\mathcal{L}(A)|^2 \mu(A,L),
\end{equation}
where the Möbius function  $\mu: \mathcal{L}(L) \times\mathcal{L}(L) \rightarrow \mathbb{Z}$ is recursively defined by: 
\begin{equation} \label{mobius}
    \sum_{C \in[B / A]} \mu(A, C)= \begin{cases}1, & A=B, \\ 
   \\ 
   0, & \text { otherwise } \end{cases}
\end{equation}
and   with a set
$[B / A]=\{C \in \mathcal{L}(L) \mid A \leq C \leq B\}$
for $A \leq B$,  called  \textit{interval} of $\mathcal{L}(L)$. Note that $[B / A]$ is a sublattice of $\mathcal{L}(L)$, see \cite{sr1, sr2, or, rs}.

\section{Proofs of the main results and final considerations}\label{sec4}

A  counting argument is sufficient for the proof of our first main result.

\begin{proof}[Proof of Theorem \ref{heisenberg}]
Since $\mathfrak{h}(1)$ has $p^3-1$ non-zero elements,  there are $p^3-1/p-1=p^2+p+1$   subalgebras of dimension one in $\mathfrak{h}(1)$, and among these \[\langle z\rangle=\{0, z, 2z, \cdots, (p-1)z\}\] is an ideal of $\mathfrak{h}(1)$. 
Assume that $A=\langle a \rangle$ and $B=\langle b \rangle$ are  one dimensional  subalgebras of $\mathfrak{h}(1)$, where $a=a_1x+a_2y+a_3z$ and $b=b_1x+b_2y+b_3z$ for $a_i,b_j\in \mathbb{F}_p$ with $i,j=1,2,3.$ 
Then 
\[[a,b]=[a_1x+a_2y+a_3z, \ b_1x+b_2y+b_3z]=(a_1b_2-a_2b_1)z\] implies that $A+B$ is a subalgebra of $\mathfrak{h}(1)$ if and only if $(a_1b_2-a_2b_1)z \in A+B$. This means that 
\[A  \ \mbox{permute with} \  B   \  \ \Longleftrightarrow \ \ (a_1b_2-a_2b_1)z \in A+B.\]  Therefore the condition  $[A, B]\subseteq A+B$ is satisfied if either $a_1b_2-a_2b_1=0$ or  if $z\in A+B$ in the case $a_1b_2-a_2b_1\neq 0$.  But, this happens only if both $A$ and $B$ are contained in the following sets  
\begin{itemize}
    \item [1).] $\{\langle  x+y \rangle, \langle x+y+z\rangle, \langle x+y+2z\rangle, \cdots , \langle x+y+(p-1)z\rangle \}$ or
    \item [2).] $\{\langle  x+2y \rangle, \langle x+2y+z\rangle, \langle x+2y+2z\rangle, \cdots , \langle x+2y+(p-1)z\rangle \}$ or 

$\vdots$

    \item [$p-1$).]  $\{\langle  x+(p-1)y \rangle, \langle x+(p-1)y+z\rangle, \langle x+(p-1)y+2z\rangle, \cdots , \langle x+(p-1)y+(p-1)z\rangle \}$  or
    \item [$p$).] $\{\langle  x \rangle, \langle x+z\rangle, \langle x+2z\rangle, \cdots , \langle x+(p-1)z\rangle \}$ or 
    \item [$p+1$).] $\{\langle  y \rangle, \langle y+z\rangle, \langle y+2z\rangle, \cdots , \langle y+(p-1)z\rangle \}$.  
\end{itemize}
We conclude that $\mathfrak{h}(1)$ has $p+1$ subalgebras of dimension two of  type $A+\langle z \rangle= B+A=B+\langle z \rangle $, where $A$ and $B$ are taken from the same class of sets listed above. Equivalently any chosen $A$ and $B$ from different classes of sets do not permute. Therefore we recognize in  $\mathcal{L}(\mathfrak{h}(1))$ the presence of   1 zero dimensional Lie subalgebra, of $p^2+p+1$  one dimensional Lie subalgebras, of $p+1$ two dimensional Lie subalgebras and of $1$ three dimensional Lie subalgebras. In particular,  $$|\mathcal{L}(\mathfrak{h}(1))|=1+(p^2+p+1)+(p+1)+1= {p^2+2p+4}. $$ 
Since each  of the two dimensional Lie subalgebras are of codimension one,  they are all maximal quasi-ideals. In addition,  $\langle z \rangle $,  $\{0\}$ and $\mathfrak{h}(1)$ are ideals of $\mathfrak{h}(1)$, then there are only ${p^2+p}$ nonquasi-ideals of which every $p$ of them permute each other. In other words  we find
$$|\mathcal{Q}_{\mathcal{L}(\mathfrak{h}(1))}(A)|=\left\{\begin{array}{lcl} p^2+2p+4,&\,\,& \mathrm{if} \ A \ \mbox{is a quasi-ideal}, \\  
\\ 
 2p+4,&\,\,& \mathrm{otherwise} . \\
 \end{array}\right.$$
Hence  
 $$\mathrm{sd}(\mathfrak{h}(1))= \frac{1}{(p^2+2p+4)^2} \Big (3 (p^2+2p+4) + (p^2+2p+4)(p+1)+  (2p+4) (p^2+p)\Big) $$ $$ =\frac{3p^3+12p^2+16p+16}{{(p^2+2p+4)^2}} . $$
\end{proof}




\begin{remark}Applying Theorem \ref{heisenberg} with $p=2$ and  $\mathbb{F}_2$, we get \[\mathrm{sd}(\mathfrak{h}(1))=\frac{5}{6},\] which corresponds to the subalgebra commutativity degree of $\mathfrak{h}(1))$. 
It is important to recall that in \cite{mt} the subgroup commutativity degree of the minimal non-abelian group of order 6, namely the symmetric group $S_3$, is \[\mathrm{sd}(S_3)=\frac{|\{(H,K) \in \mathrm{L}(S_3) \times \mathrm{L}(S_3) \mid HK=KH\}|}{|\mathrm{L}(S_3)|^2}=\frac{5}{6}\] and this means that there is no structural relationship between $S_3$, which is a finite group,  with $\mathfrak{h}(1))$, which is indeed a Heisenberg algebra,  because the number of permutable subgroups is generally  different   from the number of permutable subalgebras.  \end{remark}

Of course, we may proceed to compute $\mathrm{sd}(\mathfrak{h}(1))$ using Theorem \ref{heisenberg} for an odd prime $p$. However, let's compute $\mathrm{sd}(\mathfrak{h}(1)))$ explicitly below since this will be useful  later on.  

\begin{example} \label{exam3} Consider $\mathfrak{h}(1)=
\langle x,y,z|[x,y]=z\rangle $ on \(\mathbb{F}_{3}\). Then  $|\mathfrak{h}(1))|=27 $ and
$$\mathfrak{h}(1)=\{0,x,y,z,
2x,2y,2z, 
x+y, x+z, y+z,
2x+y, 2x+z, 2y+z,
x+2y, x+2z, y+2z, 
2x+2y, 2x+2z, $$
$$ 2y+2z,
x+y+z, 2x+2y+2z,
2x+y+z, x+2y+z, x+y+2z,
2x+2y+z, 2x+y+2z, x+2y+2z
\}.$$
We find by inspection in $\mathcal{L}(\mathfrak{h}(1))$ that there are $13$ one dimensional Lie subalgebras, namely 
\begin{multicols}{2}
\begin{itemize}
\item [1.] $\langle x \rangle=\{0, x,2x\}$
\item [2.] $\langle y \rangle=\{0, y, 2y\}$
\item [3.]   $\langle x+y \rangle=\{0, x+y,2x+2y\}$
\item [4.] $\langle x+z \rangle=\{0, x+z,2x+2z\}$
\item [5.] $\langle y+z \rangle=\{0, y+z,2y+2z\}$
\item [6.] $\langle x+2y \rangle=\{0, x+2y,2x+y\}$
\item [7.] $\langle x+2z \rangle=\{0, x+2z,2x+z\}$
\item [8.] $\langle y+2z \rangle=\{0, y+2z,2y+z\}$
\item [9.] $\langle x+y+z \rangle=\{0, x+y+z,2x+2y+2z\}$
\item [10.] $\langle 2x+y+z \rangle=\{0, 2x+y+z,x+2y+2z\}$
\item [11.] $\langle x+2y+z \rangle=\{0, x+2y+z,2x+y+2z\}$
    \item [12.] $\langle x+y+2z \rangle=\{0, x+y+2z,2x+2y+z\}$
    \item [13.] $\langle z \rangle= \{0, z,2z\}$
\end{itemize} 
\end{multicols}
Clearly $\langle z \rangle= \{0, z,2z\}$ is the only  one dimensional ideal of $\mathfrak{h}(1)$. Next we determine which of the two dimensional $A=\langle x \rangle \cup  \langle y \rangle=\{a_1x+a_2y+a_3z| a_1,a_2, a_3\in \mathbb{F}_{3}\}$ vector subspaces of $\mathfrak{h}(1)$ are subalgebras. Since $A$ will form a subalgebra if $\{0,z\} \subseteq A$,   it is easy to check that  
\begin{itemize}
\item [1.] $\langle x \rangle\cup \langle z \rangle= \langle x \rangle\cup \langle x+z \rangle=
\langle x \rangle\cup \langle x+2z \rangle$, 

\item [2.] $\langle y \rangle\cup \langle z \rangle= \langle y \rangle\cup \langle y+z \rangle=
\langle y \rangle\cup \langle y+2z \rangle$, 

\item [3.] $\langle x+y \rangle\cup \langle z \rangle= \langle x+y \rangle\cup \langle x+y+z \rangle=
\langle x+y \rangle\cup \langle x+y+2z \rangle$ and 

\item [4.] $\langle x+2y \rangle\cup \langle z \rangle= \langle x+2y \rangle\cup \langle x+2y+z \rangle=
\langle x+2y \rangle\cup \langle x+2y+2z \rangle$
are the two dimensional subalgebras, of course they are ideals of $\mathfrak{h}(1)$. 
\end{itemize}
One can see that $\{\langle x \rangle, \langle x+z \rangle,\langle x+2z \rangle\}$ is one of the set of permuting triples which are not quasi-ideals of $\mathfrak{h}(1)$. The same is true for the other classes. Therefore,  $$|\mathcal{Q}_{\mathcal{L}(\mathfrak{h}(1))}(A)|=\left\{\begin{array}{lcl} p^2+2p+4=19,&\,\,& \mathrm{if} \ A \ \mbox{is a quasi-ideal}, \\ 
\\ 
 2p+4=10,&\,\,& \mathrm{otherwise}.  \\
 \end{array}\right. 
 $$ Hence \[\mathrm{sd}(\mathfrak{h}(1))=\frac{253}{361}.\]
\end{example}

The logic of Example \ref{exam3} motivates the argument of the following proof.

\begin{proof}[Proof of Theorem \ref{sdforsl2}] 
We should remember that $q$ is a power of an odd prime $p$. Then, since 
 $\mathfrak{sl}_2(\mathbb{F}_q)=\Bigg \{ \begin{pmatrix}a&b \\ c&-a\end{pmatrix}\Big| a,b,c \in \mathbb{F}_q \Bigg \}$, it is easy to check that $ \Bigg \{e=\begin{pmatrix} 0 & 1 \\ 0 & 0 \end{pmatrix}, f=\begin{pmatrix} 0 & 0 \\ 1 & 0 \end{pmatrix}, h=\begin{pmatrix} 1 & 0 \\ 0 & -1 \end{pmatrix}\Bigg \}$ forms a basis for $\mathfrak{sl}_2(\mathbb{F}_q)$ with 
 \begin{equation} \label{bracket}
     [h, e] = 2e, \quad [h, f] = -2f , \quad [e, f] = h 
 \end{equation}
 and  $|\mathfrak{sl}_2(\mathbb{F}_q)|=q^3$.  Since every one dimensional subspace of $\mathfrak{sl}_2(\mathbb{F}_3)$ is a one dimensional Lie subalgebra, there are $\frac{q^3-1}{q-1}=q^2+q+1$ one dimensional Lie subalgebras such that
\begin{equation} \label{size of lines}
    q+1, \ \ \frac{q(q+1)}{2} \ \ \ \mbox{and}\ \ \ \frac{q(q-1)}{2}
\end{equation}
are  nilpotent, split semisimple  and  nonsplit semisimple,  respectively. Now,
using \eqref{bracket} we obtain that  $\langle e \rangle$, $\langle f \rangle$  and  $N_\mu=\langle h+\mu e -\mu^{-1}f\rangle$ for all $\mu \in\{1,2,\cdots, q-1\}$ are the one dimensional subalgebras coresponding to the nilpotent Lie subalgebras, and each  of them, along with the one dimensional subalgebras generated by split semisimple element, generates  two dimensional  subalgebras in $\mathfrak{sl}_2(\mathbb{F}_q)$. 
This means, if $S_i$ (with $ \ i \in\{1,2,\cdots, q-1\}$) is split semisimple, that
$$\mathfrak{b}_+=\langle \langle e \rangle, \langle h \rangle \rangle, \ \ \mathfrak{b}_-=\langle \langle f \rangle, \langle h \rangle \rangle, \ \
\mathfrak{b}_{\mu}= \langle N_\mu, S_i \rangle $$  are all the $q+1$ Borel subalgebras of $\mathfrak{sl}_2(\mathbb{F}_q)$, and one is nilpotent while $q$ split semisimple.  
This implies, including the trivial ideal and $\mathfrak{sl}_2(\mathbb{F}_q)$ itself, that
\begin{equation}\label{size of the lattice of sl2}
      |\mathcal{L}(\mathfrak{sl}_2(\mathbb{F}_q))|=2+(q^2+q+1)+(q+1)= q^2+2q+4 \ \ \mbox{and} \ \ |\mathcal{Q}_{\mathcal{L}(\mathfrak{sl}_2(\mathbb{F}_q))}(\mathcal{L}(\mathfrak{sl}_2(\mathbb{F}_q)))|= q+3.
\end{equation}
Now, we can apply \eqref{permutewithA} to obtain $|\mathcal{Q}_{\mathcal{L}(\mathfrak{sl}_2(\mathbb{F}_q))}(A)|$ for each $A \in \mathcal{L}(\mathfrak{sl}_2(\mathbb{F}_q))$.  So, we consider the following cases. 

\medskip
\textit{Case 1.} Assume $A \in \mathcal{Q}_{\mathcal{L}(\mathfrak{sl}_2(\mathbb{F}_q))}(\mathcal{L}(\mathfrak{sl}_2(\mathbb{F}_q)))$. 

\medskip
In this case it is obvious that $$\mathcal{Q}_{\mathcal{L}(\mathfrak{sl}_2(\mathbb{F}_q))}(A)=q^2+q+4.$$

\medskip
\textit{Case 2.} Let $A$ be  nilpotent subalgebra. 

\medskip
Since each nilpotent Lie algebra in $\mathfrak{sl}_2(\mathbb{F}_q)$ along with split semisimple are enough to generate a Borel subalgebra,  each of them can not be contained in two different Borel subalgebras. In fact,  the intersection of  two Borel  subalgebras is always a  one dimensional subalgebra which is generated by a semisimple element. This implies that each $q+1$ nilpotent Lie algebra of dimension one permutes with exactly $q+1$ one dimensional subalgebras contained in the same Borel subalgebra. In other words, \eqref{permutewithA}  implies 
$$\mathcal{Q}_{\mathcal{L}(\mathfrak{sl}_2(\mathbb{F}_q))}(A)=(q+1)+(q+3)=2q+4.$$ 

\medskip
\textit{Case 3.} Let $A$ be split semisimple subalgebra.

\medskip
In this case, each $A$ is the intersection of two different Borel subalgebras.  This means  $A$ permutes with $q+1$ subalgebras in the first Borel subalgebra and with the remaining $q$ subalgebras contained in the second Borel subalgebra. Thus,
$$\mathcal{Q}_{\mathcal{L}(\mathfrak{sl}_2(\mathbb{F}_q))}(A)=(q+1)+q=(2q+1)+(q+3)=3q+4.$$

\medskip
\textit{Case 4.} Let $A$ be  nonsplit semisimple subalgebra.

\medskip
Since there are exactly $\frac{q(q-1)}{2}$ maximal one dimensional subalgebras in $\mathfrak{sl}_2(\mathbb{F}_q)$, which do not generate a Borel subalbegra, then each of them do not permute with every other one dimensional subalgebra. This implies $$\mathcal{Q}_{\mathcal{L}(\mathfrak{sl}_2(\mathbb{F}_q))}(A)=q+4.$$ 
Therefore, by applying Lemma \ref{combinatorialformulat} with \eqref{size of lines} and \eqref{size of the lattice of sl2} we obtain
$$(q^2+2q+4)^2\mathrm{sd}(\mathfrak{sl}_2(\mathbb{F}_q)))=(q+1)(q^2+2q+4)+(q+3)(2q+4)+\Big(\frac{q(q+1)}{2}\Big)(3q+4)+\Big(\frac{q(q-1)}{2}\Big)(q+4).$$
This completes the proof.
\end{proof}

\begin{corollary} \label{f2ofsl2}
If $p$ is an odd prime and $q$ a prime power of $p$, then  
$F_2(\mathfrak{sl}_2(\mathbb{F}_q))=2q^3+5q^2+5q+7.$ In addition, $F_2(\mathfrak{h}(1))=2q^3+5q^2+5q+7.$
\end{corollary}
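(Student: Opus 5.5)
The plan is to compute $F_2$ directly from its definition, by counting ordered pairs $(A,B)\in\mathcal{L}(L)\times\mathcal{L}(L)$ with $A+B=L$. For this I will use the description of the subalgebra lattices obtained in the proofs of Theorems \ref{heisenberg} and \ref{sdforsl2}. Going through the identity (\ref{sdandf2viamobius}) with the Möbius function would also work, but it needs $\mu(A,L)$ for every $A$, so I keep it only as a possible cross-check.

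For $L=\mathfrak{sl}_2(\mathbb{F}_q)$ the proof of Theorem \ref{sdforsl2} gives the following structure:
\begin{itemize}
\item one zero subalgebra;
\item $q^2+q+1$ one dimensional subalgebras, namely every line;
\item exactly $q+1$ two dimensional subalgebras, the Borel subalgebras, each a $2$-dimensional subspace and hence containing exactly $q+1$ lines;
\item $L$ itself.
\end{itemize}
The same shape holds for $\mathfrak{h}(1)$: it has $p+1$ two dimensional subalgebras, each containing $\langle z\rangle$ and $p+1$ lines, with $p$ in place of $q$. I read the second assertion of the corollary with $q$ replaced by $p$, or as $\mathfrak{h}(1)$ over $\mathbb{F}_q$, where the same count goes through verbatim.

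Since $\dim L=3$ and $\dim(A+B)\le \dim A+\dim B$, the pairs with $A+B=L$ fall into three disjoint types. When $A+B=L$ the sum is automatically a subalgebra, so no permutability condition needs checking.
\begin{itemize}
\item[(a)] One of $A,B$ equals $L$. This gives $2|\mathcal{L}(L)|-1=2(q^2+2q+4)-1$ pairs.
\item[(b)] One of $A,B$ is two dimensional and the other is a line not contained in it. There are $(q+1)\big((q^2+q+1)-(q+1)\big)=(q+1)q^2$ such unordered choices, hence $2q^2(q+1)$ ordered pairs.
\item[(c)] $A$ and $B$ are distinct two dimensional subalgebras. Their sum is $3$-dimensional, so this gives $(q+1)q$ ordered pairs.
\end{itemize}
Two lines, or a line with $0$, can never sum to $L$, and a line inside a $2$-dimensional subalgebra does not enlarge it, so nothing else contributes.

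Summing the three types,
\[
(2q^2+4q+7)+(2q^3+2q^2)+(q^2+q)=2q^3+5q^2+5q+7,
\]
and the identical computation gives the value for $\mathfrak{h}(1)$.

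The main point requiring care is the lattice input. The list of two dimensional subalgebras must be exactly $q+1$ (respectively $p+1$) and complete, since $F_2$ is sensitive to every one of them. For $\mathfrak{sl}_2(\mathbb{F}_q)$ I would double-check completeness independently: a $2$-dimensional subalgebra is solvable, hence contains a nonzero ad-nilpotent element, hence a nilpotent line. Each of the $q+1$ nilpotent lines lies in a unique Borel subalgebra, so there are exactly $q+1$ such subalgebras. For $\mathfrak{h}(1)$, a $2$-dimensional subalgebra containing $a,b$ with $[a,b]\neq0$ would contain $z$, so every $2$-dimensional subalgebra contains $\langle z\rangle$; these correspond to lines of $\mathfrak{h}(1)/\langle z\rangle$, giving $p+1$. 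Once this is in place the rest is the elementary count above.
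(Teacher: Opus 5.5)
Your count is correct and gives the stated value (for $\mathfrak{h}(1)$ read with $p$ in place of $q$, or taken over $\mathbb{F}_q$), but it follows a different route from the paper's.

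The paper never counts factorizations directly. It inverts Proposition \ref{sdandf2} via \eqref{sdandf2viamobius}, $F_2(L)=\sum_{A}\mathrm{sd}(A)\,|\mathcal{L}(A)|^2\,\mu(A,L)$. It then computes $\mu(A,L)$ for every subalgebra:
\begin{itemize}
\item $0$ for the zero subalgebra and for lines lying in exactly one plane;
\item $q$ for $\langle z\rangle$;
\item $1$ and $-1$ for split and nonsplit semisimple lines;
\item $-1$ for planes.
\end{itemize}
Finally it feeds in the values of $\mathrm{sd}$ from Theorems \ref{heisenberg} and \ref{sdforsl2}. For $\mathfrak{h}(1)$, for instance, this reads $(3q^3+12q^2+16q+16)-(q+1)(q+3)^2+4q$.

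Your count is shorter and self-contained. It uses only the dimensions of the subalgebras and the fact that each plane contains $q+1$ lines. So it does not depend on the $\mathrm{sd}$ computations or the case analysis behind them. This is a real advantage, since the final display in the proof of Theorem \ref{sdforsl2} has the multiplicities $q+1$ and $q+3$ interchanged. It also makes clear why $\mathfrak{sl}_2(\mathbb{F}_q)$ and $\mathfrak{h}(1)$ give the same number.

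Your approach even runs in the opposite direction. The same elementary count gives $F_2=1$, $3$ and $q^2+3q+5$ for the zero subalgebra, a line and a plane. Summing over $\mathcal{L}(L)$ through Proposition \ref{sdandf2} then reproves both theorems. What the paper's route offers instead is an illustration of how $F_2$ is recovered from $\mathrm{sd}$ data through the M\"obius function.

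One small tightening is needed in your completeness check for $\mathfrak{sl}_2(\mathbb{F}_q)$. The step ``solvable, hence contains a nonzero ad-nilpotent element'' is not automatic in positive characteristic. Argue instead as follows:
\begin{itemize}
\item There are no abelian planes, because for $p$ odd the centralizer of a nonzero element is the line it spans.
\item In a non-abelian plane, $[S,S]=\langle x\rangle$ with $x=[y,x]$. Hence $\operatorname{tr}x=\operatorname{tr}x^2=0$, so $x$ is nilpotent.
\end{itemize}
With this, the rest of your argument goes through.
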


\begin{proof}
Applying \eqref{sdandf2viamobius}, we have 
$$F_2(\mathfrak{h}(1))
=\underset{A\in \mathcal{L}(\mathfrak{h}(1)) }{\sum} \mathrm{sd}(A) |\mathcal{L}(A)|^2 \mu (A, \mathfrak{h}(1)).  $$
Now from Theorem \ref{heisenberg} and  \eqref{mobius},
if $A=1$, then we have $$\mu(1,\mathfrak{h}(1))= -\Big(\mu(1,1)+(q^2+q+1)\mu(1,B)+(q+1)\mu(1,C)\Big)$$ where $B$ is a one dimensional and $C$ is a two dimensional subalgebras of  $\mathfrak{h}(1)$. 
But $\mu(1,1)=1$,
$\mu(1,B)=-(\mu(1,1))=-1$ and $\mu(1,C)= -\big(\mu(1,1)+(q+1)\mu(1,B)\big)=q$. This implies $\mu(1,\mathfrak{h}(1))=-(1+(q^2+q+1)(-1)+q(q+1)=0$.  Next if $A=B\neq \langle z\rangle$ with $\langle z\rangle$ is a quasi-ideal as shown in the proof of Theorem \ref{heisenberg}, then $\mu(A,\mathfrak{h}(1))=0$, if $A=\langle z\rangle$ then $\mu(\langle z\rangle,\mathfrak{h}(1))=-(\mu(\langle z\rangle, \langle z\rangle)+(q+1)\mu(\langle z\rangle,C))=q$, if $A=C$, then $\mu(\langle z\rangle,\mathfrak{h}(1))=-1$ and $\mu(\mathfrak{h}(1),\mathfrak{h}(1))=1$ for $A=\mathfrak{h}(1)$. Finally since $|\mathcal{L}(1)|=1$, $|\mathcal{L}(\langle z\rangle)|=|\mathcal{L}(B)|=2$,  $|\mathcal{L}(C)|=q+3$ , $|\mathcal{L}(\mathfrak{h}(1))|= q^2+2q+4$, $\mathrm{sd}(C)=1$ and from Theorem \ref{heisenberg} 
we obtain  
$$F_2(\mathfrak{h}(1))
=  (q+1)(q+3)^2(-1)+4q+ (3q^3+12q^2+16q+16)= 2q^3+5q^2+5q+7.$$
 To show the first part, assume $A=1$ and  let $D_1$, $D_2$ and $D_3$ be nilpotent, split semisimple and nonsplit semisimple Lie algebras, respectively. Then, using the counting arguments in the proof of Theorem \ref{sdforsl2} we have
$$\mu(1,\mathfrak{sl}_2(\mathbb{F}_q))= -\Big(\mu(1,1)+(q+1)\mu(1,D_1)+ \frac{q(q+1)}{2}\mu(1,D_2) + \frac{q(q-1)}{2}\mu(1,D_3) +
+(q+1)\mu(1,C)\Big)$$ where $C$ is a Borel subalgebras of  $\mathfrak{sl}_2(\mathbb{F}_q)$. 
But $\mu(1,1)=1$,
$\mu(1,D_1)=\mu(1,D_2)=\mu(1,D_3)=-(\mu(1,1))=-1$ and since each Borel subalgebra contains one $D_1$ and $q$ $D_1's$ we have
$\mu(1,C)= -\big(\mu(1,1)\mu(1,D_1)+q\mu(1,D_2)\big)=q$.
When $A=D_1$, then $\mu(D_1,\mathfrak{sl}_2(\mathbb{F}_q))=-(\mu(D_1,D_1)+ \mu(D_1,C))=0$. 
When $A=D_2$, then $\mu(D_2,\mathfrak{sl}_2(\mathbb{F}_q))=-(\mu(D_2,D_2)+ 2\mu(D_1,C))=1$. 
When $A=D_3$, then $\mu(D_3,\mathfrak{sl}_2(\mathbb{F}_q))=-(\mu(D_3,D_3)=-1$.
When $A=\mathfrak{sl}_2(\mathbb{F}_q)$, then $\mu(\mathfrak{sl}_2(\mathbb{F}_q),\mathfrak{sl}_2(\mathbb{F}_q))=1$. Therefore, since $|\mathcal{L}(1)|=1$, $|\mathcal{L}(D_1)|=|\mathcal{L}(D_2)|=|\mathcal{L}(D_3)=2$,  $|\mathcal{L}(C)|=q+3$ , $|\mathcal{L}(\mathfrak{sl}_2(\mathbb{F}_q))|= q^2+2q+4$, $\mathrm{sd}(C)=1$, then the result follows from Theorem \ref{sdforsl2}.
The proof follows completely. 
\end{proof}

\begin{example}For $m=1$ and an odd prime $p$, we  observe that $\mathfrak{sl}_2(\mathbb{F}_{p}) \not\simeq \mathfrak{h}(m) $.  However Theorems \ref{heisenberg} and  \ref{sdforsl2} show that $\mathrm{sd}(\mathfrak{sl}_2(\mathbb{F}_{p}))=\mathrm{sd}(\mathfrak{h}(1))$. This is to say that the subalgebra commutativity degree may be the same for non-isomorphic Lie algebras on the same field.
\end{example}

Before to prove our final main result, we may recall a few facts on the Lazard Correspondence.

\begin{lemma}[See \cite{Khukhro1998}, Mal'cev Correspondence, p.118]  \label{lazard1} Suppose to have a positive integer $c<p$ with $p$ odd prime. To every finite $p$-group $P$ of exponent $p$ and nilpotency class $\leq c$, the Lazard Correspondence associates a Lie algebra $L_P$ over $\mathbb{F}_p$ with the same underlying set $L_P=P$ and with operations $a+b=h_1(a, b)$ and $[a, b]=h_2(a, b)$, and $r a=a^r$ for every $r \in \mathbb{F}_p$. Conversely, for every Lie algebra $L$ over $\mathbb{F}_p$ of nilpotency class $\leq c$, there is a corresponding finite $p$-group $P_L$ of exponent $p$ with the same underlying set and group operation defined by $a * b=H(a, b)$ such that $a^r=r a$ for $r \in \mathbb{F}_p$. These operations are inverses to each other: as Lie algebras on $\mathbb{F}_p$, we have $L_{P_L}=L$ and additionally $P_{L_P}=P$ as finite $p$-groups.
\end{lemma}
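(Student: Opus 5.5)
The statement is the classical Lazard correspondence, quoted from \cite{Khukhro1998}, so the plan is to reduce it to formal identities over $\mathbb{Q}$ and then check that those identities survive reduction modulo $p$ when $c<p$.

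\textbf{Step 1: the Baker--Campbell--Hausdorff formula over $\mathbb{Q}$.} Work first in the free nilpotent Lie algebra $\Lambda_c$ of class $c$ over $\mathbb{Q}$ on generators $a,b,d$. There the formula $H(a,b)=a+b+\frac12[a,b]+\cdots$, truncated in degree $\le c$, is a finite expression, and $(\Lambda_c,H)$ is a group. This follows from the usual argument via $\exp$ and $\log$ in the completed free associative algebra, truncated modulo degree $c+1$:
\begin{itemize}
\item associativity comes from $\exp(a)\exp(b)=\exp(H(a,b))$;
\item the identity is $0$ and the inverse of $a$ is $-a$;
\item $H(ra,sa)=(r+s)a$, because $a$ commutes with itself.
\end{itemize}
Dually, in the free nilpotent group of class $c$, passed to its Mal'cev $\mathbb{Q}$-completion, the words $h_1,h_2$ are obtained by inverting $H$. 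They satisfy formal identities expressing:
\begin{itemize}
\item that $H(h_1(g,h),\dots)$ recovers the group product;
\item the Lie algebra axioms for $(h_1,h_2)$;
\item $h_1(H(a,b),\ldots)=a+b$, and similarly for the bracket.
\end{itemize}

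\textbf{Step 2: $p$-integrality.} All coefficients of $H$, $h_1$, $h_2$ in degrees $\le c$ have denominators divisible only by primes $\le c$. For $H$ I would use the Goldberg/Dynkin form of the coefficients, or argue inductively that $\log$ and $\exp$ truncated at degree $c$ involve only $1/k!$ with $k\le c$. For $h_1,h_2$ the same bound follows from the inductive construction of the Mal'cev basis, in which each step solves for a commutator of weight $k\le c$ with coefficients in $\frac{1}{k!}\mathbb{Z}$. Since $c<p$, every coefficient lies in $\mathbb{Z}_{(p)}$. This is the step I expect to be the main obstacle, especially for the inverse formulas $h_1,h_2$. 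I would handle it by working over the ring $\mathbb{Z}_{(p)}$ throughout: the free nilpotent Lie ring of class $c$ tensored with $\mathbb{Z}_{(p)}$, and the $\mathbb{Z}_{(p)}$-powered free nilpotent group.

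\textbf{Step 3: specialization.} Given $L$ over $\mathbb{F}_p$ of class $\le c$, every identity from Step 1 has $\mathbb{Z}_{(p)}$ coefficients. It therefore specializes, via the universal property of the free nilpotent Lie $\mathbb{Z}_{(p)}$-algebra, to $L$, so $P_L=(L,H)$ is a group. Its exponent is $p$, since $a^p=pa=0$ and more generally $a^r=ra$ for $r\in\mathbb{F}_p$.

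Conversely, let $P$ have exponent $p$ and class $\le c<p$. Then $P$ is uniquely $\mathbb{Z}_{(p)}$-powered, with $g^{l/m}=g^{l m'}$ where $mm'\equiv 1 \pmod p$, so the words $h_1,h_2$ make sense in $P$. The specialized identities give the Lie algebra axioms over $\mathbb{F}_p$. The nilpotency class is preserved in both directions because $H-(a+b)$ and $h_2-[\,\cdot,\cdot\,]$ lie in higher terms of the respective lower central series.

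Finally, $L_{P_L}=L$ and $P_{L_P}=P$ follow by specializing the formal inversion identities from Step 1 to $L$ and to $P$ respectively.
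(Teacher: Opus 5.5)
The paper gives no proof of this lemma; it only cites Khukhro. Your outline is the standard route of that source. However, two steps do not go through as written.

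\textbf{The integrality of $h_1,h_2$ in Step 2.} The mechanism you give is false. The weight-$k$ corrections do not have coefficients in $\frac{1}{k!}\mathbb{Z}$: the paper's own formula for $h_1$ has exponents $\mp\frac{1}{12}$ at weight $3$. Only the weaker statement, that all denominators involve primes $\le c$, is true. It is best proved without tracking coefficients at all:
\begin{itemize}
\item Let $\Lambda=\Lambda_c(\mathbb{Z}_{(p)})\langle a,b\rangle$, with the already $p$-integral law $H$ (by Dynkin's form) and powers $u^r=ru$.
\item Let $G$ be the $\mathbb{Z}_{(p)}$-powered subgroup generated by $a,b$, and prove $\gamma_k(\Lambda)\subseteq G$ by downward induction on $k$.
\item The induction uses three facts: a weight-$k$ group commutator in $a,b$ equals the corresponding Lie monomial modulo $\gamma_{k+1}(\Lambda)$; $H(u,v)\equiv u+v$ modulo $\gamma_{k+1}(\Lambda)$ for $u,v\in\gamma_k(\Lambda)$; and $H(v,I)=v+I$ for every ideal $I$.
\end{itemize}
Then $a+b$ and $[a,b]$ lie in $G$, so they are $\mathbb{Z}_{(p)}$-words in $a,b$.

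\textbf{The converse direction (the more serious gap).} You verify the Lie axioms for $(h_1,h_2)$, and the identity expressing the group product as $H$ evaluated with $h_1,h_2$, in the $\mathbb{Q}$-completion. To specialize them to $P$, they must hold in the \emph{free} nilpotent $\mathbb{Z}_{(p)}$-powered group of class $c$. That requires this free object to embed in its $\mathbb{Q}$-completion, equivalently to be $(\Lambda_c(\mathbb{Z}_{(p)}),H)$. No universal property gives this. On the Lie side the analogue is automatic, because free Lie rings are free $\mathbb{Z}$-modules; on the group side it is the real content of the correspondence.

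You can close it in one of two ways. Either cite the torsion-freeness of $p$-localizations of free nilpotent groups, or, in the exponent-$p$ setting of the lemma, count:
\begin{itemize}
\item Let $\Lambda$ be the free class-$c$ Lie $\mathbb{F}_p$-algebra on $d$ generators. By the same induction, $P_\Lambda$ is $d$-generated; it has exponent $p$ and class $\le c$.
\item Hence $P_\Lambda$ is a quotient of $B=F_d/F_d^{\,p}\gamma_{c+1}(F_d)$.
\item By Hall's basis theorem, $\gamma_k(B)/\gamma_{k+1}(B)$ is elementary abelian and generated by the images of the $w_k$ basic commutators of weight $k$, where $w_k=\dim\gamma_k(\Lambda)/\gamma_{k+1}(\Lambda)$.
\item So $|B|\le|P_\Lambda|$, and therefore $B\cong P_\Lambda$.
\end{itemize}
Every identity valid in $P_\Lambda$ is then a law of all groups of exponent $p$ and class $\le c$. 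This yields both that $L_P$ is a Lie algebra and that $P_{L_P}=P$.
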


Inspecting the proofs of Lemma \ref{lazard1}, one can see that a subset $ K \subseteq P$ is a subgroup of $P$ if and only if $ K \subseteq L$ is a Lie subalgebra. Moreover a subset $I$ is a normal subgroup of $P$ (always in the context of Lemma \ref{lazard1}) if and only if $I$ is an ideal of $L$. The nilpotency class of $P$ at the level of finite $p$-group coincides with the nilpotency class of $L$ at the level of nilpotent Lie algebra, the derived length of $P$ coincides with the derived length of $L$.

\begin{proof}[Proof of Theorem \ref{lazardforp3}]
Suppose $L$ is a nilpotent finite dimensional Lie algebra on \(\mathbb{F}_{p}\) for prime $p\ge 3$. 
Because $p$ is an odd prime and $L$ has nilpotency class $c\le p-1$, we may use the Lazard Correspondence to construct a group $P$ of order $p^{\mathrm{dim}\ (L)}$. Using  Lemma \ref{lazard1} and the fact that all ideals of $L$ become normal subgroups in $P$ and all non-ideal subalgebras of $L$ become non-normal subgroups in $P$. Now assume $A,B\in \mathcal{L}(L)$ correspond to $H,K\in \mathrm{L}(G)$ such that 
$C=\langle A, B\rangle  \ \ \mbox{and}\ \  J=\langle H, K\rangle.$
Then, $C$ and $J$ correspond and $|J|=p^{\mathrm{dim}\ (C)}$ and $$|HK|=\frac{|H||K|}{|H\cap K|}=p^{\mathrm{dim}\ (A)+\mathrm{dim}\ (B)-\mathrm{dim}\ (A\cap B)}=p^{\mathrm{dim}\ (A\cup B)}.$$
Then, $$HK=KH=J\in \mathrm{L}(G)\Longleftrightarrow  |HK|=|J| \Longleftrightarrow \mathrm{dim}\ (A+B) 
\Longleftrightarrow \mathrm{dim}\ (C) \Longleftrightarrow A+B=C\in \mathcal{L}(L).$$
Thus, Definition \ref{sdl} becomes the subgroup commutativity degree  in \cite{sr1, sr2, or, mt}.
\end{proof}

The following consequence can be checked with a direct computation.

\begin{corollary} \label{cor3}
For any odd prime $p$ and extra-special $p$-group  $E(p^3)$ of order $p^3$ and exponent $p$,   $$\mathrm{sd}(\mathfrak{h}(1)) =\mathrm{sd}(E(p^3)) = \frac{|\{(H,K)\in \mathrm{L}(E(p^3)) \times \mathrm{L}(E(p^3)) \mid HK=KH\}|}{|\mathrm{L}(E(p^3))|^2}.$$
\end{corollary}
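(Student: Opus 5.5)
The plan is to obtain Corollary \ref{cor3} as a direct specialization of Theorem \ref{lazardforp3} to $L=\mathfrak{h}(1)$, and then to check the identification of the group side.

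First, I will verify the hypotheses of Theorem \ref{lazardforp3}. The algebra $\mathfrak{h}(1)$ is nilpotent over $\mathbb{F}_p$ with $[\mathfrak{h}(1),\mathfrak{h}(1)]=\langle z\rangle$ and $[[\mathfrak{h}(1),\mathfrak{h}(1)],\mathfrak{h}(1)]=0$, so its class is $c=2$. For every odd prime $p\ge 3$ we therefore have $c<p$, and the Lazard Correspondence of Lemma \ref{lazard1} applies. It produces a $p$-group $P=P_{\mathfrak{h}(1)}$ of exponent $p$ and order $p^3$. By the remarks after Lemma \ref{lazard1}, $P$ has class $2$, so it is non-abelian.

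Next, I identify $P$ with $E(p^3)$. Since $c=2$, the Baker--Campbell--Hausdorff product truncates to $a*b=a+b+\tfrac12[a,b]$. The group commutator of $x$ and $y$ then equals $[x,y]=z$. The center of $P$ is the center of $\mathfrak{h}(1)$, namely $\langle z\rangle$, because central elements correspond under Lazard. The derived subgroup is also $\langle z\rangle$, of order $p$. Hence $P$ is extra-special of order $p^3$ and exponent $p$, and by the classical classification of groups of order $p^3$ this group is unique up to isomorphism, so $P\cong E(p^3)$.

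Theorem \ref{lazardforp3} then gives $\mathrm{sd}(\mathfrak{h}(1))=|\{(H,K)\mid HK=KH\}|/|\mathrm{L}(E(p^3))|^2$, which is the stated equality. The value itself is the one from Theorem \ref{heisenberg}.

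As an independent sanity check, I would recount directly in $E(p^3)$, treating this as secondary. The subgroups are: $1$; the $p^2+p+1$ subgroups of order $p$; the $p+1$ maximal subgroups, which are normal since they contain $Z=\langle z\rangle$; and the whole group. This gives $p^2+2p+4$ subgroups, matching $|\mathcal{L}(\mathfrak{h}(1))|$. Two non-central order-$p$ subgroups $H,K$ permute exactly when $HK$ is a subgroup, which forces $H,K$ to lie in the same maximal subgroup, i.e.\ $HZ=KZ$. This is exactly the partition into $p+1$ classes of size $p$ used in the proof of Theorem \ref{heisenberg}, so the per-subgroup permutizer counts agree.

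The main obstacle I expect is the isomorphism $P\cong E(p^3)$, in particular confirming that Lazard preserves exponent $p$ and the center. I will handle it through Lemma \ref{lazard1}, which gives exponent $p$ directly via $a^r=ra$, and the observation that $[a,b]=0$ if and only if $a*b=b*a$ when $c=2$.
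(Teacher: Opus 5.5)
Your proposal is correct and matches the paper's approach. The paper presents the corollary as a consequence of Theorem~\ref{lazardforp3} for $\mathfrak{h}(1)$, which has class $2<p$, and says it can be checked by direct computation. You also make explicit the identification $P_{\mathfrak{h}(1)}\cong E(p^3)$ (non-abelian, order $p^3$, exponent $p$), which the paper leaves implicit, and your count in $E(p^3)$ is exactly the direct computation the paper alludes to: permutizers of size $p^2+2p+4$ for the $p+4$ normal subgroups and $2p+4$ for the remaining $p^2+p$.
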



\begin{thebibliography}{99}

\bibitem{amayo1976} R. K. Amayo, Quasi-ideals of Lie algebras I, \textit{Proc. London Math. Soc.} \textbf{33} (1976), 28--36.

\bibitem{amayo1976II} R. K. Amayo, Quasi-ideals of Lie algebras II, \textit{Proc. London Math. Soc.}  \textbf{33} (1976), 37–-64.

\bibitem{amsc} R. K. Amayo and J. Schwarz, Modularity in Lie algebras, \textit{Hiroshima Math. J.} \textbf{10} (1980), 311--322.


\bibitem{barnes1967} D.W. Barnes, On the cohomology of soluble Lie algebras, \textit{Math. Z.} \textbf{101} (1967), 343--349.

\bibitem{lazard} C. D'Elb\'ee, I. M\"uller, N. Ramsey and D. Siniora,  Model-theoretic properties of nilpotent groups and Lie algebras, \textit{J. Algebra} \textbf{662} (2025),  640--701.




\bibitem{boto} K. Bowman and D. A. Towers, Modularity conditions in Lie algebras, \textit{Hiroshima Math. J.} \textbf{19} (1989), 333--346.


\bibitem{bowman2004} K. Bowman, D Towers and V. Varea,  On Upper-modular subalgebras of a Lie algebra,  \textit{Proc.  Edinburgh Math. Soc.} (2004) \textbf{47}, 325–-337. 




\bibitem{burde1} D. Burde and W. A. Moens,  Semisimple decompositions of Lie algebras and prehomogeneous modules  \textit{J. Algebra}  \textbf{604} (2022), 664--681.

\bibitem{burde2}D. Burde, W. Moens and P. P\'aez-Guill\'an, Counterexamples to the Zassenhaus conjecture on simple modular Lie algebras, \textit{J. Algebra} \textbf{629} (2023), 21--37.



\bibitem{elduque1} A. Elduque, A note on modularity in Malcev algebras, \textit{Arch. Math.} \textbf{50} (1988), 424--428.

\bibitem{elduque2}  A. Elduque, On semi-modular Malcev algebras, \textit{Arch. Math.} \textbf{50} (1988), 328-- 336.


\bibitem{farrokhi1}M.Farrokhi and F.Saeedi,Factorization numbers of some finite groups,\textit{Glasgow Math.J.}\textbf{54}(2012),345--354.


\bibitem{farrokhi2} M.Farrokhi and F.Saeedi,Subgroup permutability degree of $PSL(2,p^n)$,\textit{Glasgow Math.J.}\textbf{55}(2013),581-590.


\bibitem{gein} A. G. Gein, Semi-modular Lie algebras, \textit{Siberian Math. J.} \textbf{17} (1976), 189--193.



\bibitem{hhr} W. Herfort, K.H. Hofmann and F.G. Russo, When is the sum of two closed subgroups closed in a locally compact abelian group?, \textit{Topology Appl.} \textbf{270} (2020), Article ID 106958. 



\bibitem{kegel1} O.H. Kegel, Produkte nilpotenter Gruppen, \textit{Arch. Math.} \textbf{12}  (1961), 90--93.

\bibitem{kegel2} O.H. Kegel, Untergruppenverbände endlicher Gruppen, die den Subnormalteilerverband echt enthalten, \textit{Arch. Math.} \textbf{30} (1978), 225--228. 


\bibitem{Khukhro1998} E. I. Khukhro, \textit{p-automorphisms of finite p-groups}, Cambridge University Press, Cambridge,  1998.

\bibitem{kolman1965} B. Kolman, Semi-modular Lie algebras, \textit{J. Sci. Hiroshima Univ. Ser. A-I} \textbf{29} (1965), 149--163.

\bibitem{lashi} A. A. Lashi, On Lie algebras with modular lattices of subalgebras, \textit{J. Algebra} \textbf{99} (1986),  80--88.

\bibitem{lenston}J.C. Lennox and S.E. Stonehewer, \textit{Subnormal subgroups of groups},  Clarendon Press, Oxford, 1987.  

\bibitem{sr1} S.K. Muhie, D. E. Otera and F.G. Russo, Factorization number and subgroup commutativity degree via spectral invariants, \textit{Comp. Appl. Math.} \textbf{42} (2023), Art. No. 132.


\bibitem{sr2} S.K. Muhie and F.G. Russo, The probability of commuting subgroups in arbitrary  lattices of subgroups, \textit{Int. J. Group Theory}  \textbf{10} (2021),  125--135.

\bibitem{or}D. E. Otera and F.G. Russo, Subgroup $S$-commutativity degrees of finite groups, \textit{Bull. Belg. Math. Soc. Simon Stevin} \textbf{19} (2012), 373--382.







\bibitem{rs}   R. Schmidt, \textit{Subgroup lattices of groups}, de Gruyter, Berlin, 1994.

\bibitem{strade1} H. Strade,  \textit{Simple Lie algebras over fields of positive characteristic I}, de Gruyter, Berlin, 2004.


\bibitem{mt}   M. T\v{a}rn\v{a}uceanu, Subgroup commutativity degrees of finite groups, \textit{J. Algebra} \textbf{321} (2009), 2508--2520.



\bibitem{david1987} D. A. Towers, Almost nilpotent Lie algebras, \textit{Glasgow Math. J.} \textbf{29} (1987), 7--11.

\bibitem{david1981} D. A. Towers, On Lie algebras in which modular pairs of subalgebras are permutable, \textit{J. Algebra} \textbf{68} (1981), 369--377.








\end{thebibliography}
\end{document}